\newtheorem{dfn} [subsection]{Definition}
\newtheorem{obs} [subsection]{Remark}
\newtheorem{exm} [subsection]{Example}
\newtheorem{prop}[subsection]{Proposition}
\newtheorem{teor}[subsection]{Theorem}
\newtheorem{lema}[subsection]{Lemma}
\def\SL{\operatorname{SL}}
\def\Gal{\operatorname{Gal}}
\def\Lin{\operatorname{Lin}}
\def\Irr{\operatorname{Irr}}
\def\Reg{\operatorname{Reg}}
\def\ord{\operatorname{ord}}
\def\Supp{\operatorname{Supp}}
\def\Ker{\operatorname{Ker}}
\def\Ar{\operatorname{Ar}}
\def\Hol{\operatorname{Hol}}
\def\Sup{\operatorname{Sup}}
\def\lcm{\operatorname{lcm}}
\def\Cons{\operatorname{Cons}}
\numberwithin{equation}{section}
\begin{document}
\selectlanguage{english}
\frenchspacing

\large
\begin{center}
\textbf{On supercharacter theoretic generalizations of monomial groups and Artin's conjecture}

Mircea Cimpoea\c s and Alexandru F. Radu
\end{center}
\normalsize

\begin{abstract} 

We extend the notions of quasi-monomial groups and almost monomial groups, in the framework of
supercharacter theories, and we study their connection with Artin's 
conjecture regarding the holomorphy of Artin $L$-functions.

\noindent \textbf{Keywords:} Artin L-function; monomial group; almost monomial group; supercharacter theory.

\noindent \textbf{2020 Mathematics Subject
Classification:} 11R42; 20C15.
\end{abstract}

\section*{Introduction}

A group $G$ is called \emph{monomial}, if every complex irreducible character $\chi$ of $G$ is induced by a linear character $\lambda$ of
a subgroup $H$ of $G$, that is $\chi=\lambda^G$. A group $G$ is called \emph{quasi-monomial}, if for every irreducible character $\chi$ of $G$, 
there exists a subgroup $H$ of $G$ and a linear character $\lambda$ of $H$ such that $\lambda^G=d\chi$, where $d$ is a positive integer. 
A finite group $G$ is called {\em almost monomial} if for every distinct complex irreducible characters $\chi$ and $\psi$ of  $G$ 
there exist a subgroup $H$ of $G$ and a linear character $\lambda$ of $H$ such that the induced character $\lambda^G$ contains 
$\chi$ and does not contain  $\psi$. This definition, which generalizes quasi-monomial groups, appears \cite{monat} in connection with the 
study of the holomorphy of Artin L-functions associated to a finite Galois extension of $\mathbb Q$ at a point in the complex plane.
An equivalent characterization for almost monomial groups is given in Proposition \ref{113}.
 
Let $K/\mathbb Q$ be a finite Galois extension with Galois group $G$. For any character $\chi$ of $G$, let $L(s,\chi):=L(s,\chi,K/\mathbb Q)$
be the corresponding Artin L-function (\cite[P.296]{artin2}). Artin's conjecture states that $L(s,\chi)$ is holomorphic in $\mathbb C\setminus \{1\}$.
If the group $G$ is monomial (or quasi-monomial), then Artin's conjecture holds. 

Let $\Hol(s_0)$ be the semigroup of Artin $L$-functions, holomorphic at $s_0\in\mathbb C\setminus\{1\}$. F. Nicolae \cite{monat} proved that
if $G$ is almost monomial, then Artin's conjecture holds at $s_0$ if and only if $\Hol(s_0)$ is factorial. 
Let $\chi_1,\ldots,\chi_r$ be the complex irreducible characters of $G$, $f_1=L(s, \chi_1),\ldots,f_r=L(s,\chi_r)$ 
the corresponding Artin L-functions. In \cite{cim} it was proved that if $G$ is almost monomial and $s_0$ is not a common zero for 
any two distinct L-functions $f_k$ and $f_l$ then all Artin L-functions of $K/\mathbb Q$ are holomorphic at $s_0$. 
Also in \cite{cim}, some basic properties of almost monomial groups were stated.

The notion of a supercharacter theory for a finite group was introduced in 2008, by Diaconis and Isaacs \cite{isaacs}, as follows: 
A supercharacter theory of a finite group $G$, is a pair $C=(\mathcal X,\mathcal K)$ where $\mathcal X=\{X_1,\ldots,X_r\}$
is a partition of $\Irr(G)$, the set of irreducible characters of $G$, and $\mathcal K$ is a partition of $G$, such that: (1) $\{1\}\in\mathcal K$, 
(2) $|\mathcal X|=|\mathcal K|$ and (3) $\sigma_X:=\sum_{\psi\in X}\psi(1)\psi$ is constant for each $X\in\mathcal X$ and $K\in\mathcal K$. \pagebreak

The aim of our paper is to extend the notions of quasi-monomial and almost monomial groups in the framework of supercharacter theories, 
and to discuss the connections
with the supercharacter theoretic Artin conjecture, introduced by Wong \cite{wong}, which states that $L(s,\sigma_X)$ are 
holomorphic in $\mathbb C\setminus \{1\}$ for each $X\in \mathcal X$.

We say that $G$ is $C$-quasi-monomial, if for each $X\in\mathcal X$, there exists some subgroups $H_1,\ldots,H_t$ of $G$ and some linear characters
$\lambda_1,\ldots,\lambda_t$ such that $\lambda_1^G+\cdots+\lambda_t^G = d\sigma_X$, see Definition \ref{cqm-def}. We prove that the class of 
$C$-quasi-monomial groups is closed under factorization and taking direct products, see Theorem \ref{t25} and Theorem \ref{t28}. 
In Proposition \ref{doi}, we note that a $C$-quasi-monomial group satisfies the supercharacter theoretic Artin conjecture.

We say that $G$ is  \emph{$C$-almost monomial}, 
if for any $k\neq \ell$, there exist some subgroups $H_1,\ldots,H_t \leqslant G$ and 
linear characters $\lambda_1,\ldots,\lambda_t$ of $H_1,\ldots, H_t$ such that:
$\lambda_1^G+\cdots+\lambda_t^G=\sum_{i=1}^m \alpha_i\sigma_{X_i}$, where $\alpha_i\geq 0$ are integers with $\alpha_k>0$ and $\alpha_{\ell}=0$,
see Definition \ref{cam-def}.  We prove that the class of $C$-almost monomial
groups is closed under factorization and taking direct products, see Theorem \ref{t213} and Theorem \ref{t214}.

Let $F_1:=L(s,\sigma_{X_1}),\ldots, F_m:=L(s,\sigma_{X_m})$ and let $\Hol(C,s_0)$ be the semigroup of the functions
of the form $F:=F_1^{a_1}\cdots F_m^{a_m}$, where $a_i\geq 0$ are integers, which are holomorphic at $s_0$. 
In Theorem \ref{t11}, we prove that if $G$ is $C$-almost monomial, then the supercharacter theoretic Artin conjecture holds at $s_0$ 
if and only if $\Hol(C,s_0)$ is factorial. Also, in Theorem \ref{t22}, we prove that if $G$ is $C$-almost monomial and $s_0$ is not a common zero for any two distinct 
L-functions $F_{\ell}$ and $F_k$, where $k\neq \ell \in \{1,\ldots,m\}$, then the supercharacter theoretic Artin conjecture holds at $s_0$. These results,
generalize the aforementioned results on almost monomial groups.

\section{Preliminaries}

We recall that a finite group $G$ is monomial (or a $M$-group), if for any $\chi\in \Irr(G)$ then there exists a subgroup $H\leqslant G$
and a linear character $\lambda$ of $H$ such that $\lambda^G=\chi$. Any Abelian group $G$ is monomial, since all the irreducible
characters of $G$ are linear, but the converse is not true. 
According to Taketa's Theorem \cite{taketa}, every monomial group is solvable, but there are solvable groups
which are not monomial, the smallest example being $\SL(2,3)$. A slight generalization of monomial groups is the following:

\begin{dfn}\label{qm-def}
A finite group $G$ is called quasi-monomial (or a QM-group) if for any $\chi\in \Irr(G)$ then there exists a subgroup $H\leqslant G$
and a linear character $\lambda$ of $H$ such that $\lambda^G=d\chi$, where $d$ is a positive integer.
\end{dfn}

It is not known if there are quasi-monomial groups which are not monomial.

For a character $\psi$ of $G$, we denote $\Cons(\psi)$ the set of constituents of $\psi$.
We recall the following definition, which generalize quasi-monomial groups:

\begin{dfn}(\cite{monat})\label{am-def}
A finite group $G$ is called {\em almost monomial} (or AM-group) if for every two distinct characters $\chi \neq \psi \in \Irr(G)$, there exists a 
subgroup $H$ of $G$ and a linear character $\lambda$ of $H$ such $\chi\in \Cons(\lambda^G)$ and $\psi\notin \Cons(\lambda^G)$.
\end{dfn}

An important class of almost monomial groups are the symmetric groups, $S_n$, see \cite[Theorem 1.1]{cim}.
If $G$ is an almost monomial group and $N\unlhd G$ is a normal subgroup, then $G/N$ is almost monomial,
see \cite[Theorem 2.2]{cim}. Also, if $G,G'$ are finite groups, then $G\times G'$ is almost monomial if and only if
$G$ and $G'$ are almost monomial, see \cite[Theorem 2.3]{cim}.

The following result provides an equivalent form of Definition \ref{am-def} and shows that there is a 
kind of "duality" between the notions of quasi-monomial and almost monomial groups.

\begin{prop}\label{113}
Let $G$ be a finite group and assume that $\Irr(G)=\{\chi_1,\ldots,\chi_r\}$. Then, the following are equivalent:
\begin{enumerate}
\item[(1)] $G$ is almost monomial.
\item[(2)] For any $k\in\{1,\ldots,r\}$, there exists some subgroups $H_1,\ldots,H_m$ of $G$
and some linear characters $\lambda_1,\ldots,\lambda_m$ of $H_1,\ldots,H_m$ such that:
$$\Cons(\lambda_1^G+\cdots+\lambda_m^G)=\Irr(G)\setminus \{\chi_k\}.$$
\end{enumerate}
\end{prop}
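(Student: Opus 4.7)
The proof is a clean equivalence, and the plan is to handle the two directions separately using the observation that taking constituents commutes with sums: $\Cons(\psi_1 + \cdots + \psi_m) = \Cons(\psi_1) \cup \cdots \cup \Cons(\psi_m)$ whenever all $\psi_i$ are characters (no cancellation is possible).

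For the direction (1) $\Rightarrow$ (2), I would fix $k \in \{1,\ldots,r\}$ and, for every $\ell \neq k$, apply the almost monomial property to the ordered pair $(\chi_\ell,\chi_k)$: there exist a subgroup $H_\ell \leqslant G$ and a linear character $\lambda_\ell$ of $H_\ell$ with $\chi_\ell \in \Cons(\lambda_\ell^G)$ and $\chi_k \notin \Cons(\lambda_\ell^G)$. I would then form the sum over all $\ell \neq k$. Each $\chi_\ell$ with $\ell \neq k$ appears as a constituent (it appears in the $\ell$-th summand), while $\chi_k$ appears in none of the summands, so by the union formula the set of constituents is exactly $\Irr(G) \setminus \{\chi_k\}$.

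For the direction (2) $\Rightarrow$ (1), given distinct $\chi_k, \chi_\ell \in \Irr(G)$, I would apply (2) with index $\ell$: this yields subgroups $H_1,\ldots,H_m$ and linear characters $\lambda_1,\ldots,\lambda_m$ such that $\Cons(\lambda_1^G + \cdots + \lambda_m^G) = \Irr(G) \setminus \{\chi_\ell\}$. Since $k \neq \ell$, $\chi_k$ lies in the left hand side, so $\chi_k \in \Cons(\lambda_j^G)$ for some $j$. Meanwhile, $\chi_\ell$ is absent from the union, hence absent from every individual $\Cons(\lambda_i^G)$. The pair $(H_j, \lambda_j)$ then witnesses the almost monomial property for $(\chi_k, \chi_\ell)$.

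Neither direction has a serious obstacle; the only thing to be careful about is justifying the union formula $\Cons(\sum \psi_i) = \bigcup \Cons(\psi_i)$, which holds because characters are nonnegative integer combinations of irreducibles and so no constituent can cancel. I would state this briefly at the start of the proof and then the two implications follow in a few lines each.
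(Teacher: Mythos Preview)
Your proposal is correct and follows essentially the same approach as the paper: both directions use the almost monomial definition applied coordinatewise together with the fact that $\Cons(\sum_i \psi_i)=\bigcup_i \Cons(\psi_i)$ for characters. Your write-up is in fact slightly cleaner than the paper's, which contains a minor indexing slip in the $(2)\Rightarrow(1)$ direction.
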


\begin{proof}
$(1)\Rightarrow (2)$. Without loss of generality, we can assume that $k=r$.
According to Definition \ref{am-def}, 
for any $1\leq j\leq r-1=:m$, there exists a subgroup $H_{j}\leqslant G$ and a linear character $\lambda_{j}$ of $H_{j}$ such that
$\chi_j\in \Cons(\lambda_{j}^G)$ and $\chi_r \notin \Cons(\lambda_{j}^G)$.
It follows that 
$\Supp(\lambda_1^G+\cdots+\lambda_m^G)=\{\chi_1,\ldots,\chi_{r-1}\}$,
as required.

$(2)\Rightarrow (1)$. We fix $1\leq i\leq r$. Assume the condition $(2)$ is satisfied and let $k\neq i$. 
It follows that there exists $1\leq j\leq m$ such that $\chi_j\in \Cons(\lambda_j^G)$. On the other hand, 
$\chi_i\notin \Cons(\lambda_j^G)$, hence $G$ is almost monomial.
\end{proof}

\begin{exm}\label{ea5}\emph{
Let $A_5$ be the alternating group of order $5$. $A_5$ is a simple non-Abelian group, hence is not solvable.
Therefore $A_5$ is not monomial. However, $A_5$ is almost monomial:}
\emph{
We have that $\Irr(A_5)=\{\chi_1,\chi_2,\chi_3,\chi_4,\chi_5\}$, where $\chi_1$ is the trivial character, 
$\chi_2$ and $\chi_3$ are conjugated characters of degree $3$, $\chi_4$ has degree $4$ and $\chi_5$ has degree $5$. Obviously, $\chi_1$ is linear. Also one can check that 
$\chi_5$ is monomial. Let $H=\langle (12345) \rangle \subset A_5$, which is isomorphic to the cyclic group of order $5$. The characters induced from the irreducible 
(linear) characters of $H$ are $\chi_1+\chi_2+\chi_3+\chi_5$, $\chi_2+\chi_4+\chi_5$ and $\chi_3+\chi_4+\chi_5$.}

\emph{
Let $U=\langle (12)(45),(345) \rangle \subset A_5$, which is isomorphic to $S_3$. Let $\psi:U\rightarrow \{\pm 1\}$ be the sign function on $U$, 
which is a linear character. We have that $\psi^{A_5} = \chi_2+\chi_3+\chi_4$. From Proposition \ref{113} it follows that $A_5$ is almost monomial.}
\end{exm}

Let $K/\mathbb Q$ be a finite Galois extension. 
For the character $\chi$ of a representation of the Galois group $G:=\Gal(K/\mathbb Q)$
on a finite dimensional complex vector space let $L(s,\chi):=L(s,\chi,K/\mathbb Q)$ be the corresponding Artin L-function 
(\cite[P.296]{artin2}). 
Artin conjectured that $L(s,\chi)$ is holomorphic in $\mathbb C\setminus \{1\}$.   
Brauer proved that $L(s,\chi)$ is meromorphic in $\mathbb C$.
Let $\chi_1,\ldots,\chi_r$ be the irreducible characters of $G$, $f_1=L(s, \chi_1),\ldots,f_r=L(s,\chi_r)$ the corresponding Artin L-functions.

For two characters $\phi$ and $\psi$ of $G$,
$L(s,\phi+\psi) = L(s,\phi) \cdot L(s,\psi)$, so the set of L-functions corresponding to all characters
of $G$ is a multiplicative semigroup, denoted by $\Ar$. 

Since any character of $G$ is a linear 
combination with positive integer coefficients 
of irreducible characters, the semigroup $\Ar$ is generated by $f_1,\ldots,f_r$, that is
$$\Ar:=\{f_1^{k_1}\cdot\ldots\cdot f_r^{k_r}\mid k_1\geq 0,\ldots,k_r\geq 0\}.$$
Since $f_1,\ldots,f_r$ are multiplicatively independent, see \cite[Satz 5, p. 106]{artin1}, it follows
that $\Ar$ is factorial of rank $r$; in other words, $\Ar$ is isomorphic to $\mathbb Z_{\geq 0}^r$.
Moreoever, F.\ Nicolae \cite{crelle} proved that $f_1,\ldots,f_r$ are algebraically independent over $\mathbb C$,
a result extended later in \cite{forum}, where it was proved that $f_1,\ldots,f_r$ are algebraically independent 
over the field of meromorphic functions of order $<1$.

For $s_0\in\mathbb C,s_0\neq 1$ let $\Hol(s_0)$ be the
subsemigroup of $Ar$ consisting of the L-functions which are holomorphic at
$s_0$. F.\ Nicolae \cite{numb} proved that $\Hol(s_0)$ is an 
affine subsemigroup of $\Ar$, isomorphic to an affine subsemigroup of $\mathbb Z_{\geq 0}^r$.
Artin's conjecture at $s_0$ can be stated as:
$\Hol(s_0)=Ar$. We end this section by recalling the following results:


\begin{teor}(\cite{monat})\label{t1}
If $G=\Gal(K/\mathbb Q)$ is almost monomial, then the following assertions are equivalent:
\begin{enumerate}
\item[1)] Artin's conjecture is true: $\Hol(s_0)=\Ar.$
\item[2)] The semigroup $\Hol(s_0)$ is factorial.
\end{enumerate}
\end{teor}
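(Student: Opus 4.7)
I will prove the equivalence by reformulating $\Hol(s_0)$ in polyhedral terms: setting $n_i := \ord_{s_0}(f_i)$ and identifying $\Ar$ with $\mathbb{N}^r$ via the multiplicative independence of the $f_i$, one has $\Hol(s_0) = C \cap \mathbb{Z}^r$, where $C := \{a \in \mathbb{R}_{\geq 0}^r : \sum_i a_i n_i \geq 0\}$ is a rational polyhedral cone. Artin's conjecture at $s_0$ is equivalent to $C = \mathbb{R}_{\geq 0}^r$, i.e.\ to $n_i \geq 0$ for all $i$. The direction $(1) \Rightarrow (2)$ is then immediate: $\Hol(s_0) = \Ar \cong \mathbb{N}^r$ is a free commutative monoid, hence factorial.

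For $(2) \Rightarrow (1)$, the plan is to use factoriality to force $C$ to be simplicial with $r$ extreme rays, and then to enumerate the few possibilities and eliminate the non-trivial ones with the help of Proposition \ref{113}. First, Brauer's induction theorem writes each $\chi_i$ as a $\mathbb{Z}$-linear combination of induced linear characters $\lambda_j^G$, whose $L$-functions are Hecke $L$-functions and hence lie in $\Hol(s_0)$; passing to exponent vectors, $e_i \in \Hol(s_0)^{\mathrm{gp}}$ for each $i$, so $\Hol(s_0)^{\mathrm{gp}} = \mathbb{Z}^r$. Combining this with factoriality and the finite generation of $\Hol(s_0)$ proved in \cite{numb}, one concludes $\Hol(s_0) \cong \mathbb{N}^r$, whose $r$ atoms form a $\mathbb{Z}$-basis of $\mathbb{Z}^r$; the real cone they generate --- which is precisely $C$ --- is therefore simplicial of dimension $r$.

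It then remains to classify the extreme rays of $C$. Let $P^+ := \{i : n_i > 0\}$, $P^0 := \{i : n_i = 0\}$ and $N := \{i : n_i < 0\}$. The extreme rays are $\mathbb{R}_{\geq 0}\, e_i$ for $i \in P^+ \cup P^0$, together with $\mathbb{R}_{\geq 0}(|n_j|\, e_i + n_i\, e_j)$ for each pair $(i,j) \in P^+ \times N$. Equating their total count to $r$ forces $|N|(|P^+| - 1) = 0$, so either $|N| = 0$ --- which is Artin's conjecture at $s_0$ --- or $|P^+| = 1$. In the latter case, let $k_0$ be the unique element of $P^+$ and apply Proposition \ref{113}: it yields $F_{k_0} = \prod_{i \neq k_0} f_i^{c_i} \in \Hol(s_0)$ with $c_i \geq 1$, whence $\sum_{i \neq k_0} c_i n_i \geq 0$; but every $n_i$ with $i \neq k_0$ is non-positive and at least one is strictly negative, yielding a contradiction.

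The main obstacle lies in bridging the gap between the abstract factoriality of $\Hol(s_0)$ and the geometric simpliciality of $C$, which requires combining Nicolae's finite generation result with Brauer's induction theorem to pin down the Grothendieck group and its rank. Once this bridge is in place, the polyhedral enumeration of extreme rays together with a single application of Proposition \ref{113} finishes the argument cleanly.
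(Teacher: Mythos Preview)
Your argument is correct but follows a genuinely different route from the paper's (the paper does not prove Theorem~\ref{t1} directly, but the specialization of its proof of Theorem~\ref{t11} to the classical theory gives the intended argument). The paper's approach is hands-on: having fixed $k$ with $\ord(f_k)<0$, it picks a \emph{single} index $\ell$ with $\ord(f_\ell)>0$ (supplied by the holomorphy of $\zeta_K$), exhibits the $r$ explicit irreducibles $f_\ell^{\,n_i}f_i$ in $\Hol(s_0)$, argues that factoriality forces these to be the full Hilbert basis, and then applies the almost-monomial \emph{definition} to the pair $(k,\ell)$ to produce a holomorphic $\prod_i f_i^{\alpha_i}$ with $\alpha_k>0$ and $\alpha_\ell=0$, which visibly cannot be written as a product of those generators. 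Your approach is polyhedral: you invoke Brauer induction to identify $\Hol(s_0)^{\mathrm{gp}}$ with $\mathbb Z^r$, deduce that the cone $C$ is simplicial of full dimension, enumerate its extreme rays from the facet description, and reduce via the count $|P^+|+|P^0|+|P^+|\cdot|N|=r$ to the single residual case $|P^+|=1$, which you dispatch with Proposition~\ref{113} rather than Definition~\ref{am-def}. Your route is more structural and makes the Grothendieck-group step explicit, at the cost of importing Brauer's theorem; the paper's route is shorter and avoids Brauer entirely, since the multiplicative independence of the $f_\ell^{\,n_i}f_i$ already pins down the rank. Both arguments ultimately exploit the same tension---a holomorphic monomial in the $f_i$ whose support avoids the positive-order index---but they locate it by different means.
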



\begin{teor}(\cite{cim})\label{t2}
If $G$ is almost monomial and $s_0$ is not a common zero for any two distinct L-functions $f_k$ and $f_l$ then all Artin L-functions of $K/\mathbb Q$ 
are holomorphic at $s_0$.  
\end{teor}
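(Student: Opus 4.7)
The plan is to argue by contradiction: assume some $f_k$ has a pole at $s_0$, and derive an impossibility by combining Proposition \ref{113} with the common-zero hypothesis.

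First I would invoke Proposition \ref{113} for each index $\ell\in\{1,\ldots,r\}$ to obtain subgroups $H_1,\ldots,H_m\le G$ and linear characters $\lambda_j$ of $H_j$ (all depending on $\ell$) such that
$$\sum_{j=1}^m \lambda_j^G \;=\; \sum_{i\ne\ell}a_i^{(\ell)}\chi_i,\qquad a_i^{(\ell)}\ge 1.$$
Passing to L-functions and using multiplicativity,
$$F_\ell\;:=\;\prod_{j=1}^m L(s,\lambda_j^G)\;=\;\prod_{i\ne\ell}f_i^{\,a_i^{(\ell)}}.$$
Since each $\lambda_j$ is a linear character of a subgroup $H_j$, the L-function $L(s,\lambda_j^G)=L(s,\lambda_j,K/K^{H_j})$ is a Hecke L-function by Artin reciprocity, hence holomorphic on $\mathbb{C}\setminus\{1\}$. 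Therefore $F_\ell$ is holomorphic at the given $s_0\ne 1$ for every $\ell$.

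Set $n_i:=\ord_{s_0}(f_i)\in\mathbb{Z}$, well-defined since $f_i$ is meromorphic by Brauer's theorem. Holomorphy of $F_\ell$ at $s_0$ translates to
$$\sum_{i\ne\ell}a_i^{(\ell)}n_i\;\ge\;0\qquad(\ell=1,\ldots,r).$$
The hypothesis that $s_0$ is not a common zero of any two distinct $f_k,f_j$ amounts to the combinatorial fact that at most one index $i$ satisfies $n_i>0$.

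Now assume for contradiction that $n_k<0$ for some $k$. Choose $\ell\ne k$ with $n_\ell>0$ if such an index exists, otherwise pick $\ell\ne k$ arbitrarily. In either case $n_i\le 0$ for every $i\ne\ell$; consequently each summand of $\sum_{i\ne\ell}a_i^{(\ell)}n_i$ is nonpositive, and the summand indexed by $i=k$ equals $a_k^{(\ell)}n_k<0$ because $a_k^{(\ell)}\ge 1$. This forces the sum to be strictly negative, contradicting the displayed inequality. The step I expect to require the most care is the holomorphy of $L(s,\lambda_j^G)$ off $s=1$, which is the classical consequence of Artin reciprocity identifying $\lambda_j^G$ with a Hecke character of the intermediate field $K^{H_j}$; once this is in place, the rest is a short sign-counting argument on the integers $n_i$.
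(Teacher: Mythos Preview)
Your argument is correct and matches the paper's approach (which is given for the generalization, Theorem \ref{t22}): assume a pole, use that monomially induced $L$-functions are Hecke $L$-functions and hence holomorphic at $s_0$, and derive a contradiction with the at-most-one-zero hypothesis via an order count. The only cosmetic difference is that the paper first invokes $\zeta_K=\prod_i f_i^{d_i}$ to locate a zero and then uses Definition \ref{am-def} to exclude that zero while hitting the pole, whereas you use Proposition \ref{113} to exclude the unique (potential) zero directly; the two arrangements are equivalent.
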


\section{Supercharacter theoretic quasi and almost monomial groups}

Diaconis and Isaacs \cite{isaacs} introduced the theory of supercharacters as follows:

\begin{dfn}
Let $G$ be a finite group. Let $\mathcal K$ be a partition of $G$ and let $\mathcal X$ be a partition of $\Irr(G)$. 
The ordered pair $C:=(\mathcal X,\mathcal K)$
is a \emph{supercharacter theory} if:
\begin{enumerate}
\item $\{1\}\in\mathcal K$,
\item $|\mathcal X|=|\mathcal K|$, and
\item for each $X\in\mathcal X$, the character $\sigma_X=\sum_{\psi\in \mathcal X}\psi(1)\psi$ is constant on each $K\in\mathcal K$.
\end{enumerate}
The characters $\sigma_X$ are called \emph{supercharacters}, and the elements $K$ in $\mathcal K$ are called \emph{superclasses}.
We denote $\Sup(G)$ the set of supercharacter theories of $G$.
\end{dfn}

Diaconis and Isaacs showed their theory enjoys properties similar to the classical character theory.
For example, every superclass is a union of conjugacy classes in $G$; see \cite[Theorem 2.2]{isaacs}.
The irreducible characters and conjugacy classes of $G$ give a supercharacter 
theory of $G$, which will be referred to as the \emph{classical theory} of $G$.

Also, as noted in \cite{isaacs}, every group $G$ admits
a non-classical theory with only two supercharacters $1_G$ and $\Reg(G)-1_G$ and
superclasses $\{1\}$ and $G\setminus\{1\}$, where $1_G$ denotes the trivial character of $G$
and $$\Reg(G) = \sum_{\chi\in\Irr(G)}\chi(1)\chi$$ is the regular character of $G$. 
This theory will be called the \emph{maximal theory} of $G$.

Let $C=(\mathcal X,\mathcal K)$ and $C'=(\mathcal X',\mathcal K')$ be two supercharacter theories of $G$.
We write $\mathcal X\preceq \mathcal X'$ if every $X\in \mathcal X$ is a subset of some $X'\in\mathcal X'$.
This is equivalent to saying that any $X'\in \mathcal X'$ is a union of parts of $\mathcal X$. According to
\cite[Corollary 3.4]{hend}, $\mathcal X\preceq \mathcal X'$ if and only if $\mathcal K\preceq \mathcal K'$.

\begin{dfn}(\cite[Definition 3.4]{hend})
We say that $C\preceq C'$ if $\mathcal X\preceq \mathcal X'$.
\end{dfn}

The set $(\Sup(G),\preceq)$ forms a poset with the minimal element being the classical theory of $G$
and the maximal element being the maximal theory of $G$.

We introduce the following generalization of Definition \ref{qm-def}:

\begin{dfn}\label{cqm-def}
Let $G$ be a finite group and let $C:=(\mathcal X,\mathcal K)$ be a supercharacter theory on $G$. Assume that $\mathcal X=\{X_1,\ldots,X_m\}$.
We say that $G$ is \emph{$C$-quasi-monomial} (or a $C-QM$-group), 
if for any $k\in \{1,\ldots,m\}$, there exists some subgroups $H_1,\ldots,H_t \leqslant G$ (not necessarily distinct) and 
linear characters $\lambda_1,\ldots,\lambda_t$ of $H_1,\ldots, H_t$ such that:
$$\lambda_1^G+\cdots+\lambda_t^G=d\sigma_{X_k},$$ 
where $d$ is a positive integer.
\end{dfn}

\begin{prop}\label{unu}
Let $G$ be a finite group. Then the following hold:
\begin{enumerate}
\item[(1)] If $(\mathcal X,\mathcal K)$ is the classical theory of $G$, then $G$ is quasi-monomial in the sense of Definition \ref{qm-def} if and only if
      $G$ is $C$-quasi-monomial in the sense of Definition \ref{cqm-def}.
\item[(2)] If $C,C'\in\Sup(G)$ with $C\preceq C'$ and $G$ is $C$-quasi-monomial then $G$ is $C'$-quasi-monomial.
\item[(3)] If $C$ is the maximal theory of $G$, then $G$ is $C$-quasi-monomial.			
\end{enumerate}
\end{prop}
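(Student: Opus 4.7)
The plan is to treat the three parts in order.

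For (1), in the classical theory $\mathcal{X} = \{\{\chi_1\},\ldots,\{\chi_r\}\}$, so $\sigma_{\{\chi_k\}} = \chi_k(1)\chi_k$. For QM $\Rightarrow$ $C$-QM, I take $t = \chi_k(1)$ copies of the pair $(H,\lambda)$ realizing $\lambda^G = d\chi_k$, giving $\chi_k(1)\lambda^G = d\chi_k(1)\chi_k = d\sigma_{\{\chi_k\}}$. For the converse, if $\sum_{i=1}^{t}\lambda_i^G = d\chi_k(1)\chi_k$, then by the linear independence of the irreducible characters each $\lambda_i^G$ (being a non-negative integer combination of irreducibles) must be a non-negative integer multiple of $\chi_k$ alone, and any non-zero such term gives the classical QM property for $\chi_k$.

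For (2), the relation $C \preceq C'$ forces each $X' \in \mathcal{X}'$ to be a disjoint union of parts $X \in \mathcal{X}$, so $\sigma_{X'} = \sum_{X \subseteq X'}\sigma_X$. Applying the $C$-QM hypothesis gives $\sum_i \lambda_{X,i}^G = d_X \sigma_X$ for each $X \subseteq X'$. Setting $D = \lcm\{d_X : X \subseteq X'\}$, I would scale each relation by $D/d_X$ and combine them to obtain $D\sigma_{X'} = \sum_{X, i}(D/d_X)\lambda_{X, i}^G$, the required sum of induced linear characters (with subgroups repeated with multiplicity, as Definition~\ref{cqm-def} allows).

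For (3), the maximal theory has $\sigma_{X_1} = 1_G$ and $\sigma_{X_2} = \Reg(G) - 1_G$. The case $X_1$ is immediate via $(H, \lambda) = (G, 1_G)$. For $X_2$, Frobenius reciprocity applied to $1_G$ forces any realization $\sum_i \lambda_i^G = d\sigma_{X_2}$ to use only non-trivial linear characters $\lambda_i$. The existence of such a realization for some $d \geq 1$ is the classical content of Aramata's theorem (the character-theoretic fact underlying the holomorphy of $\zeta_K(s)/\zeta_{\mathbb{Q}}(s)$): for each cyclic $C \leq G$ the character $\theta_C := \Reg(C) - 1_C$ is visibly a sum of the non-trivial linear characters of $C$, and a suitable non-negative rational combination $\sum_C a_C \theta_C^G$ equals $\Reg(G) - 1_G$; clearing denominators produces the desired integer relation with $\lambda$ running over non-trivial linear characters of cyclic subgroups.

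The main obstacle is the $X_2$ case of (3): the cone generated by inductions of non-trivial linear characters need not contain every non-trivial irreducible of $G$ individually — for instance, in $A_5$ the four-dimensional irreducible $\chi_4$ is not expressible as such a non-negative combination. The argument succeeds because the specific combination $\sigma_{X_2} = \sum_{\chi \neq 1_G}\chi(1)\chi$ does admit a positive integer multiple in the cone, via Aramata's classical construction.
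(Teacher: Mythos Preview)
Your proof is correct and follows essentially the same route as the paper: parts (1) and (2) are declared ``obvious'' there, and your detailed arguments (repeating $(H,\lambda)$ with multiplicity $\chi_k(1)$ for one direction of (1), and the $\lcm$ trick for (2)) are exactly the natural ways to make them precise; for (3) the paper also invokes the Aramata--Brauer theorem and clears denominators, so your sketch via $\theta_C = \Reg(C)-1_C$ on cyclic subgroups is just a more explicit version of the same citation. Your closing remark about $A_5$ and the necessity of passing through the full combination $\sigma_{X_2}$ is a nice piece of extra context, though not needed for the proof itself.
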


\begin{proof}
$(1)$ and $(2)$ are obvious.

$(3)$ According to the Aramata-Brauer Theorem, see \cite{arama} and \cite{brauer}, $\Reg(G)-1_G$ can be written as a positive rational linear
combination of induced linear characters. It follows that there exists some subgroups $H_1,\ldots,H_t \leqslant G$ (not necessarily distinct) and 
linear characters $\lambda_1,\ldots,\lambda_t$ of $H_1,\ldots, H_t$ such that $$\lambda_1^G+\cdots+\lambda_t^G=d(\Reg(G)-1_G),$$
where $d$ is a positive integer. On the other hand, $(1_G)^G=1_G$. Thus, we get the required result.
\end{proof}

Let $G$ be a finite group and let $N\unlhd G$ be a normal subgroup of $G$.
It is well known that $\Irr(G/N)$ is in bijection with the set 
$$\{\chi\in \Irr(G)\;:\;N\subset \Ker(\chi)\}.$$
For a character $\widetilde \chi \in \Irr(G/N)$,
we denote $\chi$ the corresponding character in $\Irr(G)$, that is $\chi(g):=\widetilde{\chi}(\hat g)$ 
for all $g\in G$, where $\hat g$ is the class of $g$ in $G/N$.

\begin{lema}\label{leman}
Let $G$ be a finite group, $H\leqslant G$ a subgroup and $N\unlhd G$ a normal subgroup. Let $\lambda$ be a linear character of $H$ such that
$N\subset \Ker(\lambda^G)$. Then:
\begin{enumerate}
\item[(1)] $H\cap N \subset \Ker(\lambda)$, hence $\widetilde{\lambda}:\frac{HN}{N} \to \mathbb C^*$, $\widetilde{\lambda}(hN):=\lambda(h)N$, is a linear
      character of $\frac{HN}{N}$.
\item[(2)] For any character $\chi$ of $G$ with $N\subset \Ker(\chi)$, we have that 
      $\langle \widetilde{\lambda}^{G/N},\widetilde{\chi} \rangle = \langle \lambda^G,\chi \rangle$.
\end{enumerate}
\end{lema}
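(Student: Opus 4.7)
The plan is to handle the two parts separately: part (1) via the explicit induced-character formula, and part (2) via Frobenius reciprocity together with the second isomorphism theorem.

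For part (1), I start from
$$\lambda^G(g) = \frac{1}{|H|}\sum_{\substack{x\in G \\ x^{-1}gx\in H}}\lambda(x^{-1}gx),$$
evaluated at an arbitrary $n\in H\cap N$. The hypothesis $N\subset\Ker(\lambda^G)$ forces $\lambda^G(n)=\lambda^G(1)=[G:H]$. Since each summand is a root of unity and the sum has at most $|G|$ terms, the extremal case of the triangle inequality (a sum of $k$ unit complex numbers equal to $k$ forces every summand to equal $1$) demands $x^{-1}nx\in H$ and $\lambda(x^{-1}nx)=1$ for every $x\in G$; taking $x=1$ yields $\lambda(n)=1$. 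The map $\widetilde{\lambda}$ is then well-defined, because $h_1N=h_2N$ with $h_1,h_2\in H$ gives $h_1^{-1}h_2\in H\cap N\subset\Ker(\lambda)$, so $\lambda(h_1)=\lambda(h_2)$; multiplicativity is inherited directly from $\lambda$.

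For part (2), I apply Frobenius reciprocity on both sides to reduce to restrictions:
$$\langle\lambda^G,\chi\rangle_G = \langle\lambda,\chi|_H\rangle_H, \qquad \langle\widetilde{\lambda}^{G/N},\widetilde{\chi}\rangle_{G/N} = \langle\widetilde{\lambda},\widetilde{\chi}|_{HN/N}\rangle_{HN/N}.$$
Via $HN/N\cong H/(H\cap N)$ (second isomorphism theorem), both $\lambda$ and $\chi|_H$ descend to characters of this quotient: $\lambda$ does so by part (1), and $\chi|_H$ does because $H\cap N\subset N\subset\Ker(\chi)$. The function $\lambda(h)\overline{\chi(h)}$ is then constant on cosets of $H\cap N$ in $H$, so summing over cosets and using $|H/(H\cap N)|\cdot|H\cap N|=|H|$ converts the normalized sum over $HN/N$ into the normalized sum over $H$, giving the desired equality.

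The only delicate step is the triangle-inequality argument in (1); the rest is routine bookkeeping with induced characters and the isomorphism $HN/N\cong H/(H\cap N)$.
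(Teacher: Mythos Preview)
Your proof is correct. For part~(2) you do exactly what the paper does: apply Frobenius reciprocity on each side and then identify the two inner products via the isomorphism $HN/N\cong H/(H\cap N)$, collapsing the sum over $H$ to a sum over cosets of $H\cap N$.

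For part~(1), however, you take a genuinely different route. The paper argues by contradiction through Frobenius reciprocity: if $\lambda|_{H\cap N}$ were nontrivial, then from $(\lambda^G)|_{H\cap N}=[G:H]\cdot 1_{H\cap N}$ one gets $\langle(\lambda|_{H\cap N})^H,(\lambda^G)|_H\rangle=0$, yet $\lambda$ is a constituent of both $(\lambda|_{H\cap N})^H$ and $(\lambda^G)|_H$, which is impossible. Your approach instead writes out the induced-character sum at $n\in H\cap N$ and observes that a sum of at most $|G|$ roots of unity equalling $|G|$ forces every summand to be~$1$. This is more elementary and even yields a little extra information for free (every $G$-conjugate of $n$ lies in $H$ and in $\Ker\lambda$), whereas the paper's argument stays entirely within the inner-product formalism and avoids explicit pointwise estimates. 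Both are short; yours is arguably the more transparent of the two.
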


\begin{proof}
$(1)$ We assume that $H\cap N \nsubseteq \Ker(\lambda)$. Then $\lambda_{H\cap N}$ is a nontrivial linear character of $H\cap N$.
   On the other hand, since $N\subset \Ker(\lambda^G)$, it follows that $(\lambda^G)_{H\cap N} = |G:H|1_{H\cap N}$. Therefore,
	 by Frobenius reciprocity, we have that:
	 \begin{equation}\label{ee1}
	 \langle (\lambda_{H\cap N})^H, (\lambda^G)_{H}  \rangle = \langle \lambda_{H\cap N}, (\lambda^G)_{H\cap N} \rangle = 0.
   \end{equation}
	 On the other hand, we have that:
	 \begin{equation}\label{ee2}
	 \langle (\lambda_{H\cap N})^H, \lambda  \rangle = \langle \lambda_{H\cap N}, \lambda_{H\cap N} \rangle = 1.
   \end{equation}
	 From \eqref{ee1} and \eqref{ee2} it follows that
	 $$ \langle \lambda, (\lambda^G)_H \rangle = \langle \lambda^G,\lambda^G \rangle = 0,$$
	 and we get a contradiction.
	
$(2)$ By Frobenius reciprocity, we have that:
\begin{align*}
& \langle \widetilde{\lambda}^{G/N},\widetilde{\chi} \rangle = \langle \widetilde{\lambda},\widetilde{\chi}|_{HN/N} \rangle = \frac{|H\cap N|}{|H|} 
\sum_{\widetilde h\in HN/N}\widetilde{\lambda}(\widetilde h)\overline{\widetilde{\chi}(\widetilde h)} = \\
& = \frac{1}{H} \sum_{h\in H} \lambda(h)\overline{\chi(h)} = \langle \lambda, \chi_H \rangle = \langle \lambda^G, \chi \rangle,
\end{align*}
hence we are done.
\end{proof}

Let $G$ be a finite group and let $C:=(\mathcal X,\mathcal K)$ be a supercharacter theory of $G$. Let $N\unlhd G$ be a normal subgroup of $G$.
The group $N$ is called \emph{$C$-normal} or \emph{supernormal}, if $N$ is a union of superclasses from $C$; see \cite{marberg} and \cite{hend}.
Let $X\in \mathcal X$. By abuse of notation, we write $X\subset \Irr(G/N)$ if for any $\chi \in X$, then $N\subset \Ker(\chi)$.
Let $K\in \mathcal K$. We denote $\widetilde K:=KN/N \subset G/N$.

Now, assume that $N$ is $C$-normal. Without loss of generality, we can assume that $X_i\subset \Irr(G/N)$ for $1\leq i\leq p$ and $X_i \subsetneq \Irr(G/N)$ for 
$p+1\leq i\leq m$. Let $\widetilde{\mathcal X}:=\{\widetilde{X_1},\ldots,\widetilde{X_m}\}$ and $\widetilde{\mathcal K}:=\{\widetilde K_1,\ldots,\widetilde K_m\}$.
According to \cite[Proposition 6.4]{hend}, the pair:
$$\widetilde C:=C^{G/N}=(\widetilde{\mathcal X},\widetilde{\mathcal K})$$
is a supercharacter theory of $G/N$.

\begin{teor}\label{t25}
With the above notations, if $G$ is $C$-quasi-monomial and $N\unlhd G$ is a $C$-normal subgroup of $G$, then $G/N$ is $C^{G/N}$-quasi-monomial.
\end{teor}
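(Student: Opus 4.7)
The plan is to transfer a certifying family of induced linear characters from $G$ to $G/N$ by means of Lemma~\ref{leman}. Only the supercharacters $\sigma_{X_k}$ indexed by $k\in\{1,\ldots,p\}$ (those with $X_k\subset\Irr(G/N)$) give supercharacters of the quotient theory $C^{G/N}$, so fix such a $k$. By $C$-quasi-monomiality of $G$, choose subgroups $H_1,\ldots,H_t\leqslant G$, linear characters $\lambda_j$ of $H_j$, and a positive integer $d$ with
$$\lambda_1^G+\cdots+\lambda_t^G \;=\; d\,\sigma_{X_k}.$$

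The first step is to verify that $N\subset\Ker(\lambda_j^G)$ for every $j$. Because $X_k\subset\Irr(G/N)$, every irreducible constituent of $\sigma_{X_k}$ has $N$ in its kernel, so $N\subset\Ker(d\,\sigma_{X_k})$. Each $\lambda_j^G$ is a \emph{genuine} character, i.e.\ a nonnegative integer combination of elements of $\Irr(G)$, so the displayed equality forces every irreducible constituent of every $\lambda_j^G$ to be a constituent of $\sigma_{X_k}$; hence each such constituent contains $N$ in its kernel, and $N\subset\Ker(\lambda_j^G)$ as claimed. Lemma~\ref{leman}(1) then produces well-defined linear characters $\widetilde{\lambda_j}$ on $H_jN/N\leqslant G/N$.

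The second step is to identify
$$\widetilde{\lambda_1}^{G/N}+\cdots+\widetilde{\lambda_t}^{G/N} \;=\; d\,\sigma_{\widetilde{X_k}},$$
which is exactly what $C^{G/N}$-quasi-monomiality at the index $k$ requires. I would test both sides against an arbitrary $\widetilde\chi\in\Irr(G/N)$, with $\chi\in\Irr(G)$ its lift. Applying Lemma~\ref{leman}(2) to each $\lambda_j^G$ (legitimate since $N\subset\Ker(\lambda_j^G)$) and summing gives
$$\Big\langle \sum_{j=1}^t \widetilde{\lambda_j}^{G/N},\ \widetilde\chi\Big\rangle \;=\; \sum_{j=1}^t\langle \lambda_j^G,\chi\rangle \;=\; d\,\langle \sigma_{X_k},\chi\rangle,$$
which equals $d\,\chi(1)=d\,\widetilde\chi(1)$ when $\chi\in X_k$ (equivalently $\widetilde\chi\in\widetilde{X_k}$) and vanishes otherwise. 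Since $\sigma_{\widetilde{X_k}}=\sum_{\psi\in X_k}\psi(1)\widetilde\psi$, this is precisely the Fourier expansion of $d\,\sigma_{\widetilde{X_k}}$ in $\Irr(G/N)$, and the identity follows.

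There is no real obstacle once Lemma~\ref{leman} is available. The subtlest point is the kernel argument in the first step, where it is essential that the $\lambda_j^G$ are ordinary (not virtual) characters, so that constituents cannot cancel between different indices $j$ and each individual $\lambda_j^G$ is supported on constituents of $\sigma_{X_k}$. Letting $k$ range over $\{1,\ldots,p\}$ then completes the proof.
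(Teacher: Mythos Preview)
Your proof is correct and follows essentially the same approach as the paper: both arguments use the $C$-quasi-monomial decomposition $\sum_j\lambda_j^G=d\,\sigma_{X_k}$, verify $N\subset\Ker(\lambda_j^G)$ via the constituent containment $\Cons(\lambda_j^G)\subset X_k$, and then invoke Lemma~\ref{leman} to transfer the identity to $G/N$. Your write-up is in fact more explicit than the paper's, which simply cites Lemma~\ref{leman}(2) ``and straightforward computations'' for the final step, whereas you spell out the inner-product verification and flag the crucial non-virtuality of the $\lambda_j^G$ in the kernel argument.
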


\begin{proof}
Let $\widetilde{X_k} \in \widetilde{\mathcal X}$. Since $G$ is $C$-quasi-monomial, there exist some subgroups $H_1,\ldots,H_t \leqslant G$ and some
linear characters $\lambda_1,\ldots,\lambda_t$ of $H_1,\ldots, H_t$ such that
$$\lambda_1^G+\cdots+\lambda_t^G=d\sigma_{X_k},$$ 
where $d$ is a positive integer. We fix an index $i$ with $1\leq i\leq t$.
Since $\Cons(\lambda_i^G)\subset X_k$ and for any $\chi\in X_k$, we have $N\subset \Ker(\chi)$, it follows that
$N\subset \Ker(\lambda_i^G)$. From Lemma \ref{leman}(1), it follows that $H_i\cap N \subset \Ker(\lambda_i)$ and thus
$\widetilde{\lambda_i}:H_iN/N\to\mathbb C^*$, $\widetilde{\lambda_i}(h_iN)=\lambda_i(h_i)$, is a linear character of the subgroup $H_iN/N$ of $G/N$.
 From Lemma \ref{leman}(2) and straightforward computations, it follows that:
$$\widetilde{\lambda_1}^{G/N}+\cdots+\widetilde{\lambda_t}^{G/N}=d\sigma_{\widetilde{X_k}},$$
and thus $G/N$ is $C$-quasi-monomial.
\end{proof}

We recall the following result:

\begin{lema}(\cite[Proposition 8.1]{hend})
Let $G$ and $G'$ be two finite groups and let $C=(\mathcal X,\mathcal K)$ and $C'=(\mathcal X',\mathcal K')$ be supercharacter theories of $G$ and $G'$, respectively.
Then $C\times C'=(\mathcal X\times\mathcal X',\mathcal K\times\mathcal K')$ is a supercharacter theory of the direct product $G\times G'$.
\end{lema}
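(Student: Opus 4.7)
The plan is to verify the three axioms of a supercharacter theory for the pair $C\times C'=(\mathcal X\times\mathcal X',\mathcal K\times\mathcal K')$, where $\mathcal X\times\mathcal X':=\{X\times X':X\in\mathcal X, X'\in\mathcal X'\}$ and similarly for $\mathcal K\times\mathcal K'$. The cornerstone is the classical fact that $\Irr(G\times G')=\{\chi\otimes\chi':\chi\in\Irr(G),\ \chi'\in\Irr(G')\}$, where $(\chi\otimes\chi')(g,g')=\chi(g)\chi'(g')$; consequently $\mathcal X\times\mathcal X'$ is indeed a partition of $\Irr(G\times G')$, and $\mathcal K\times\mathcal K'$ is a partition of $G\times G'$.

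First I would dispatch the two bookkeeping axioms. Since $\{1_G\}\in\mathcal K$ and $\{1_{G'}\}\in\mathcal K'$, their product $\{(1_G,1_{G'})\}=\{1_{G\times G'}\}$ lies in $\mathcal K\times\mathcal K'$, establishing (1). For (2), the counting is immediate:
\[
|\mathcal X\times\mathcal X'|=|\mathcal X|\cdot|\mathcal X'|=|\mathcal K|\cdot|\mathcal K'|=|\mathcal K\times\mathcal K'|,
\]
using $|\mathcal X|=|\mathcal K|$ and $|\mathcal X'|=|\mathcal K'|$.

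The heart of the argument is axiom (3), and the key computation is the factorization of the supercharacter across the direct product. Fix $X\in\mathcal X$, $X'\in\mathcal X'$, and $(g,g')\in G\times G'$. Then
\[
\sigma_{X\times X'}(g,g')=\sum_{\chi\in X,\ \chi'\in X'}\chi(1)\chi'(1)\chi(g)\chi'(g')=\sigma_X(g)\cdot\sigma_{X'}(g'),
\]
where I have used that $(\chi\otimes\chi')(1,1)=\chi(1)\chi'(1)$ and that the double sum factors. Now for any superclass $K\times K'\in\mathcal K\times\mathcal K'$, the value $\sigma_X(g)$ is constant as $g$ ranges over $K$ (by axiom (3) for $C$) and $\sigma_{X'}(g')$ is constant as $g'$ ranges over $K'$ (by axiom (3) for $C'$); thus their product is constant on $K\times K'$, which is precisely axiom (3) for $C\times C'$.

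No step is a genuine obstacle here: the only mild subtlety is invoking the parametrization of $\Irr(G\times G')$ by outer tensor products in order to identify the supercharacter of a product part with the product of supercharacters. Everything else is a matter of assembling the three conditions from their analogues for $C$ and $C'$.
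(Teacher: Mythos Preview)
Your proof is correct. The paper itself does not prove this lemma; it is stated with a citation to \cite[Proposition 8.1]{hend} and used as a black box, so there is no in-paper argument to compare against. Your verification of the three axioms---using the parametrization $\Irr(G\times G')=\{\chi\otimes\chi'\}$ to see that $\mathcal X\times\mathcal X'$ partitions $\Irr(G\times G')$ and to obtain the factorization $\sigma_{X\times X'}(g,g')=\sigma_X(g)\,\sigma_{X'}(g')$---is exactly the standard route and presumably what Hendrickson does as well.
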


\begin{teor}\label{t28}
Let $G$ and $G'$ be two finite groups and let $C=(\mathcal X,\mathcal K)$ and $C'=(\mathcal X',\mathcal K')$ be supercharacter theories of $G$ and $G'$, respectively.
Then the following are equivalent:
\begin{enumerate}
\item[(1)] $G$ is $C$-quasi-monomial and $G'$ is $C'$-quasi-monomial.
\item[(2)] $G\times G'$ is $C\times C'$-quasi-monomial.
\end{enumerate}
\end{teor}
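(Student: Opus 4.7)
The plan is to base both directions on the factorization $\sigma_{X\times X'}=\sigma_X\otimes\sigma_{X'}$ for each part $X\times X'$ of $\mathcal X\times\mathcal X'$, which is immediate from $\sigma_X=\sum_{\chi\in X}\chi(1)\chi$ and $(\chi\otimes\chi')(1)=\chi(1)\chi'(1)$. For $(1)\Rightarrow(2)$, I would fix a part $X\times X'$ and invoke $C$-quasi-monomiality of $G$ and $C'$-quasi-monomiality of $G'$ separately to get subgroups $H_i\leqslant G$, $H'_j\leqslant G'$ and linear characters $\lambda_i$ of $H_i$, $\lambda'_j$ of $H'_j$ with $\sum_i\lambda_i^G=d\sigma_X$ and $\sum_j(\lambda'_j)^{G'}=d'\sigma_{X'}$. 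The product subgroups $H_i\times H'_j\leqslant G\times G'$ carry the linear characters $\lambda_i\otimes\lambda'_j$, and the standard identity $(\lambda_i\otimes\lambda'_j)^{G\times G'}=\lambda_i^G\otimes(\lambda'_j)^{G'}$ makes the double sum collapse to $dd'\,\sigma_X\otimes\sigma_{X'}=dd'\,\sigma_{X\times X'}$, which is precisely what is required.

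For $(2)\Rightarrow(1)$, I would fix $X\in\mathcal X$, pick any $X'\in\mathcal X'$, and apply the hypothesis to the part $X\times X'$ to obtain subgroups $L_1,\ldots,L_s\leqslant G\times G'$ and linear characters $\mu_i$ of $L_i$ with $\sum_i\mu_i^{G\times G'}=D\,\sigma_X\otimes\sigma_{X'}$. Restricting both sides to $G\times\{1\}\cong G$ sends the right-hand side to $D\,\sigma_{X'}(1)\sigma_X$, a positive integer multiple of $\sigma_X$. On the left, Mackey's formula rewrites each $\mu_i^{G\times G'}|_{G\times\{1\}}$ as a sum of inductions of the form $({}^{t}\mu_i|_{{}^{t}L_i\cap(G\times\{1\})})^{G\times\{1\}}$ indexed by double-coset representatives $t$. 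Since $\mu_i$ is linear, every conjugate ${}^{t}\mu_i$ and every restriction to a subgroup remains linear; identifying $G\times\{1\}$ with $G$ thus produces a positive multiple of $\sigma_X$ written as a sum of inductions of linear characters of subgroups of $G$, giving $C$-quasi-monomiality of $G$. Swapping the roles of $G$ and $G'$ yields the corresponding statement for $G'$.

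The main obstacle is the backward direction: an arbitrary subgroup $L\leqslant G\times G'$ need not split as a direct product $H\times H'$, so one cannot simply project the inductive data onto $G$ and hope to recover inductions. Mackey's formula is the right device precisely because the intersection subgroups ${}^{t}L\cap(G\times\{1\})$ it produces automatically lie in $G\times\{1\}$, and restrictions of linear characters stay linear, so the rewrite lands exactly in the form demanded by Definition \ref{cqm-def}.
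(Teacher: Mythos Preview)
Your argument is correct. The forward direction $(1)\Rightarrow(2)$ is exactly the paper's: product subgroups $H_i\times H'_j$, the tensor linear characters, and the identity $(\lambda_i\otimes\lambda'_j)^{G\times G'}=\lambda_i^G\otimes(\lambda'_j)^{G'}$ combine to give $dd'\,\sigma_{X\times X'}$.

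For $(2)\Rightarrow(1)$ you take a genuinely different route. The paper observes that $\{1\}\times G'$ is a $C\times C'$-normal subgroup of $G\times G'$ (it is the union of the superclasses $\{1\}\times K'$, $K'\in\mathcal K'$) and then simply invokes Theorem~\ref{t25}: the quotient $(G\times G')/(\{1\}\times G')\cong G$ inherits the supercharacter theory $(C\times C')^{G}=C$, so $C$-quasi-monomiality of $G$ follows. Your approach instead restricts to the subgroup $G\times\{1\}$ and uses Mackey's decomposition to rewrite each $(\mu_i^{G\times G'})|_{G\times\{1\}}$ as a sum of inductions of linear characters from subgroups of $G\times\{1\}$, matching it against $D\,\sigma_{X'}(1)\,\sigma_X$. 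Both are valid; the paper's version is shorter because it recycles machinery already built (Lemma~\ref{leman} and Theorem~\ref{t25}), while yours is self-contained and sidesteps the supernormal/quotient formalism entirely, at the cost of invoking Mackey. Your remark that arbitrary $L_i\leqslant G\times G'$ need not split as direct products is exactly the point: the paper handles this implicitly through Lemma~\ref{leman}, whereas you handle it explicitly through the intersection subgroups in Mackey's formula.
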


\begin{proof}
$(1)\Rightarrow (2)$ Let $X\in\mathcal X$ and $X'\in\mathcal X'$. From hypothesis, there exist $H_1,\ldots,H_t\leqslant G$, $\lambda_1,\ldots,\lambda_t$ linear
characters of $H_1,\ldots,H_t$ such that $$\lambda_1^G+\cdots+\lambda_t^G=d\sigma_{X_k},$$
where $d\geq 1$ is an integer. Also, there exist $H_1',\ldots,H_t'\leqslant G'$, $\mu_1,\ldots,\mu_{t'}$ linear
characters of $H_1',\ldots,H_{t'}'$ such that $$\mu_1^{G'}+\cdots+\mu_{t'}^{G'}=d'\sigma_{X'},$$
where $d'\geq 1$ is an integer. We consider the subgroups $H_i\times H'_{i'}$ of $G\times G'$ and
the linear characters $\lambda_i\times\mu_{i'}$ of $H_i\times H'_{i'}$, where $1\leq i\leq t$ and $1\leq i'\leq t'$.
A straightforward computations shows that $$\sum_{i=1}^{t}\sum_{i'=1}^{t'} (\lambda_i\times\mu_{i'})^{G\times G'} = dd'\sigma_{X\times X'},$$
thus $G\times G'$ is $C\times C'$-quasi-monomial.

$(2) \Rightarrow (1)$ The group $G'$ can be seen as a $C\times C'$-normal subgroup of $G\times G'$, hence, by Theorem \ref{t25}, 
it follows that $G$ is $C$-quasi-monomial.
\end{proof}

We introduce the following generalization of both Definition \ref{am-def} and Definition \ref{cqm-def}:

\begin{dfn}\label{cam-def}
Let $G$ be a finite group and let $C=(\mathcal X,\mathcal K)$ be a supercharacter theory on $G$. Assume that $\mathcal X=\{X_1,\ldots,X_m\}$.
We say that $G$ is 
\emph{$C$-almost monomial}, 
if for any $k\neq \ell$, there exist some subgroups $H_1,\ldots,H_t \leqslant G$ (not necessarily distinct) and 
linear characters $\lambda_1,\ldots,\lambda_t$ of $H_1,\ldots, H_t$ such that:
$$\lambda_1^G+\cdots+\lambda_t^G=\sum_{i=1}^m \alpha_i\sigma_{X_i},$$ 
where $\alpha_i\geq 0$ are integers with $\alpha_k>0$ and $\alpha_{\ell}=0$.
\end{dfn}

\begin{prop}
If $G$ is finite group and $C=(\mathcal X,\mathcal K)$ is the classical theory on $G$, then $G$ is $C$-almost monomial,
in the sense of Definition \ref{cam-def}, if and only if $G$ is almost monomial in the sense of Definition \ref{am-def}.
\end{prop}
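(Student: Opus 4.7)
The plan is to unpack what the classical supercharacter theory makes Definition \ref{cam-def} say, and then compare directly with Definition \ref{am-def}. In the classical theory, $\mathcal X=\{\{\chi_1\},\ldots,\{\chi_r\}\}$, so every part is a singleton and $\sigma_{X_i}=\chi_i(1)\chi_i$. Thus the $C$-almost monomial condition asks, for every $k\neq \ell$, for linear characters $\lambda_1,\ldots,\lambda_t$ of subgroups of $G$ satisfying
$$\lambda_1^G+\cdots+\lambda_t^G=\sum_{i=1}^r \alpha_i\,\chi_i(1)\,\chi_i,$$
with non-negative integers $\alpha_i$, $\alpha_k>0$ and $\alpha_\ell=0$.

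For the implication ``almost monomial $\Rightarrow$ $C$-almost monomial'', fix $k\neq \ell$ and apply Definition \ref{am-def} with $\chi:=\chi_k$ and $\psi:=\chi_\ell$ to obtain a subgroup $H\leqslant G$ and a linear character $\lambda$ of $H$ with $\chi_k\in\Cons(\lambda^G)$ and $\chi_\ell\notin \Cons(\lambda^G)$. Writing $\lambda^G=\sum_i\beta_i\chi_i$, we have $\beta_i\geq 0$, $\beta_k\geq 1$, and $\beta_\ell=0$. The only subtlety is that the $\beta_i$ need not be divisible by $\chi_i(1)$, but this is resolved by taking enough copies: set $M:=\prod_{i=1}^r\chi_i(1)$ and choose $(H_j,\lambda_j):=(H,\lambda)$ for $j=1,\ldots,M$. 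Then
$$\sum_{j=1}^{M}\lambda_j^G = M\lambda^G = \sum_{i=1}^{r}\frac{M\beta_i}{\chi_i(1)}\,\chi_i(1)\,\chi_i = \sum_{i=1}^{r}\alpha_i\,\sigma_{X_i},$$
with $\alpha_i:=M\beta_i/\chi_i(1)\in\mathbb Z_{\geq 0}$, $\alpha_k>0$, and $\alpha_\ell=0$, which is exactly the required decomposition.

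For the converse, given distinct $\chi_k,\chi_\ell\in\Irr(G)$, apply Definition \ref{cam-def} with the classical theory to produce linear characters $\lambda_1,\ldots,\lambda_t$ with $\sum_j\lambda_j^G=\sum_i\alpha_i\chi_i(1)\chi_i$ and $\alpha_k>0$, $\alpha_\ell=0$. Because every induced character is a non-negative integer combination of irreducibles, we must have $\chi_\ell\notin\Cons(\lambda_j^G)$ for \emph{every} $j$ (otherwise the $\chi_\ell$-coefficient of the sum would be strictly positive). On the other hand, since the $\chi_k$-coefficient $\alpha_k\chi_k(1)$ of the sum is positive, there is at least one index $j_0$ with $\chi_k\in\Cons(\lambda_{j_0}^G)$; the pair $(H_{j_0},\lambda_{j_0})$ then witnesses the almost monomial property for $(\chi_k,\chi_\ell)$. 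No step here constitutes a real obstacle; the only point that is more than bookkeeping is the divisibility issue in the forward direction, handled by the ``enough copies'' trick described above.
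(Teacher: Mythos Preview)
Your proof is correct and follows essentially the same approach as the paper's: both directions match, and the only cosmetic difference is that you take $M=\prod_i\chi_i(1)$ copies of $\lambda$ whereas the paper takes $d=\lcm(d_1,\ldots,d_r)$ copies to clear the divisibility, which is the same trick with a different (larger) constant.
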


\begin{proof}
Assume that $\Irr(G)=\{\chi_1,\ldots,\chi_r\}$, $d_j=\chi_j(1)$ for $j\in\{1,\ldots,r\}$, and let
$d=\lcm(d_1,\ldots,d_r)$. If $G$ is almost monomial, then for any $k\neq \ell$, there exists a subgroup 
$H$ of $G$ and a linear character $\lambda$ of $H$ such $\chi_k\in \Cons(\lambda^G)$ and 
$\chi_{\ell}\notin \Cons(\lambda^G)$. Then 
$$d\lambda^G = \alpha_1 (d_1\chi_1) + \cdots +\alpha_r (d_r\chi_r),$$ 
for some integers $\alpha_j\geq 0$ with $\alpha_k>0$ and $\alpha_{\ell}=0$.

Conversely, if $G$ is $(\mathcal X,\mathcal K)$-almost monomial, then there exist $H_1,\ldots,H_t \leqslant G$, subgroups of $G$,
and linear characters $\lambda_1,\ldots,\lambda_t$ of $H_1,\ldots, H_t$ such that:
$$\lambda_1^G+\cdots+\lambda_t^G=\alpha_1 (d_1\chi_1) + \cdots +\alpha_r (d_r\chi_r),$$ 
where $\alpha_j\geq 0$ are integers with $\alpha_k>0$ and $\alpha_{\ell}=0$. In particular, $\chi_{\ell}\notin \Cons(\lambda_j^G)$
for any $j\in\{1,\ldots,r\}$ and there exists $j_0\in\{1,\ldots,r\}$ with $\chi_k\in\Cons(\lambda_{j_0}^G)$. We choose $H=H_{j_0}$
and $\lambda=\lambda_{j_0}$ and we note that $\chi_k\in\Cons(\lambda^G)$ and $\chi_{\ell}\notin \Cons(\lambda^G)$. Thus, $G$ is
almost monomial.
\end{proof}

The following result, generalizes Proposition \ref{113} and its proof is similar to the proof of Proposition \ref{113}, so we skip it.

\begin{prop}\label{313}
Let $G$ be a finite group and let $C=(\mathcal X,\mathcal K)$ be a supercharacter theory on $G$, where $\mathcal X=\{X_1,\ldots,X_m\}$.
Then, the following are equivalent:
\begin{enumerate}
\item[(1)] $G$ is $C$-almost monomial.
\item[(2)] For any $k\in\{1,\ldots,m\}$, there exists some subgroups $H_1,\ldots,H_s$ of $G$
and some linear characters $\lambda_1,\ldots,\lambda_s$ of $H_1,\ldots,H_s$ such that:
$$\lambda_1^G+\cdots+\lambda_m^G=\alpha_1\sigma_{X_1}+\cdots+\alpha_{k-1}\sigma_{X_{k-1}}+\alpha_{k+1}\sigma_{X_{k-1}}+\cdots +\alpha_m\sigma_{X_{m}},$$
where $\alpha_i> 0$ are some integers.
\end{enumerate}
\end{prop}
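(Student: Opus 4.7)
The plan is to imitate the proof of Proposition~\ref{113} almost verbatim, with the supercharacters $\sigma_{X_i}$ playing the role that the irreducible characters $\chi_i$ played there. The only extra observation I need is that, since all the coefficients $\alpha_i$ in Definition~\ref{cam-def} are nonnegative, a sum of several such expressions still has nonnegative coefficients; in particular a given coefficient vanishes in the total sum if and only if it vanished in every summand.

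For the direction $(1)\Rightarrow(2)$, I would fix $k\in\{1,\ldots,m\}$ and, for each $\ell\neq k$, invoke Definition~\ref{cam-def} applied to the ordered pair $(\ell,k)$, i.e.\ asking for $\sigma_{X_\ell}$ to appear and $\sigma_{X_k}$ to be absent. This produces subgroups $H^{(\ell)}_1,\ldots,H^{(\ell)}_{t_\ell}$ and linear characters $\lambda^{(\ell)}_1,\ldots,\lambda^{(\ell)}_{t_\ell}$ satisfying
\[
\sum_{j=1}^{t_\ell}\bigl(\lambda^{(\ell)}_j\bigr)^G = \sum_{i=1}^m \alpha^{(\ell)}_i\,\sigma_{X_i},\qquad \alpha^{(\ell)}_\ell>0,\ \alpha^{(\ell)}_k=0.
\]
Concatenating these lists over all $\ell\neq k$, the full sum of induced characters reads $\sum_{i=1}^m\bigl(\sum_{\ell\neq k}\alpha^{(\ell)}_i\bigr)\sigma_{X_i}$. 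The coefficient of $\sigma_{X_k}$ vanishes (each contribution is zero) and, for each $j\neq k$, the coefficient of $\sigma_{X_j}$ is at least $\alpha^{(j)}_j>0$, giving exactly condition~(2) for the chosen $k$.

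For $(2)\Rightarrow(1)$, I would fix an arbitrary pair $k\neq\ell$ and apply condition~(2) to the index $\ell$. The resulting identity already has $\alpha_k>0$ (because $k$ lies in $\{1,\ldots,m\}\setminus\{\ell\}$, where the coefficients are strictly positive) and $\alpha_\ell=0$, which is precisely what Definition~\ref{cam-def} demands for the pair $(k,\ell)$.

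The only real pitfall is notational bookkeeping: one must keep straight which index plays the \emph{forced to appear} role and which plays the \emph{forced to be absent} role when translating between Definition~\ref{cam-def} and statement~(2), since the two formulations effectively swap these roles. No idea beyond that already present in Proposition~\ref{113} is needed, which is why the authors are comfortable omitting the proof.
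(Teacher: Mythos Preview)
Your proposal is correct and matches the paper's approach exactly: the authors omit the proof of Proposition~\ref{313}, stating only that it is similar to that of Proposition~\ref{113}, and your write-up is precisely that adaptation, replacing irreducible characters by supercharacters and constituent-set arguments by nonnegative-coefficient bookkeeping. Your remark about keeping track of which index is ``forced to appear'' versus ``forced to be absent'' is well taken and is the only place one could slip.
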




For a finite group $G$, we may ask if $C,C'\in\Sup(G)$ with $C\preceq C'$ and $G$ is $C$-almost monomial then $G$ is $C'$-almost monomial also,
as in the quasi-monomial case(see Proposition \ref{unu}(2)) The following example shows that this phenomenon is not always true:

\begin{exm}
\emph{
Let $G=SL(2,3)$ be the special linear group of degree two over a field of three elements. It is well known that $G$ is solvable, but
it is not monomial. However, $G$ is almost monomial. $G$ has $7$ irreducible characters: $\chi_1=1_G,\chi_2,\chi_3$ are linear, $\chi_4,\chi_5,\chi_6$ have
the degree $2$ and $\chi_7$ has the degree $3$. The characters $\chi_1,\chi_2,\chi_3$ and $\chi_7$ are monomial, but $\chi_4,\chi_5$ and $\chi_6$ are not.
Also, $\chi_5=\chi_2\chi_4$ and $\chi_6=\chi_3\chi_4$. Moreover, $\chi_{45}:=\chi_4+\chi_5$, $\chi_{46}:=\chi_4+\chi_6$ and $\chi_{456}:=\chi_4+\chi_5+\chi_6$ are monomial,
and any monomial character of $G$ is a linear combination of $\chi_1,\chi_2,\chi_3,\chi_7,\chi_{45},\chi_{46}$ and $\chi_{456}$.
We let: 
$$\mathcal X:=\{X_1:=\{\chi_1\},X_2:=\{\chi_2,\chi_3\},X_3:=\{\chi_4\},X_4:=\{\chi_5,\chi_6\},X_5:=\{\chi_7\}\}.$$
One can easily check that there exists a partition $\mathcal K$ of $G$ such that $C=(\mathcal X,\mathcal K)$ is a supercharacter theory of $G$ ($\mathcal K$ is
the set of classes for the equivalence relation $g \sim g'$ if and only if $\sigma_{X_i}(g)=\sigma_{X_j}(g')$ for all $1\leq i,j\leq 5$).}

\emph{We claim that $G$ is not
$C$-almost monomial. Indeed, we cannot find subgroups $H_1,\ldots,H_t$ and linear characters $\lambda_1,\ldots,\lambda_t$ of $H_1,\ldots,H_t$ such that
$$\lambda_1^G+\cdots+\lambda_t^G = \alpha_1 \sigma_{X_1} + \alpha_2 \sigma_{X_2} + \alpha_3 \sigma_{X_3} + \alpha_5 \sigma_{X_5},$$
with $\alpha_3>0$, since for any $H \leqslant G$ and $\lambda\in \Lin(H)$ with $\chi_4\in\Cons(\lambda^G)$, one has $\chi_5\in \Cons(\lambda^G)$ or $\chi_6\in \Cons(\lambda^G)$.
Contradiction by Proposition \ref{313}.}

\emph{We let: 
$\mathcal X':=\{X'_1:=\{\chi_1\},X'_2:=\{\chi_2,\chi_3\},X'_3:=\{\chi_4,\chi_5,\chi_6\},X'_4:=\{\chi_7\}\}$. Then, there exists
a partition $\mathcal K'$ of $G$ such that $C'=(\mathcal X',\mathcal K')$ is a supercharacter theory of $G$. Since $\chi_1$, $\chi_2$,
$\chi_3$, $\chi_{456}$ and $\chi_7$ are monomial, it follows that $G$ is $C'$-quasi-monomial.}
\end{exm}

The following result generalizes \cite[Theorem 2.2]{cim} and Theorem \ref{t25}:

\begin{teor}\label{t213}
Let $G$ be a finite group and let $C=(\mathcal X,\mathcal K)$ be a supercharacter theory of $G$. 
Let $N\unlhd G$ be a $C$-normal subgroup of $G$. If $G$ is $C$-almost monomial, then $G/N$ is $C^{G/N}$-almost monomial.
\end{teor}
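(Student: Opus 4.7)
The plan is to imitate the proof of Theorem \ref{t25} but with an extra argument to handle the fact that in the almost monomial setting $\sum_i \lambda_i^G$ can acquire constituents in $X_j$ with $j>p$, i.e.\ outside $\Irr(G/N)$. Fix $k\ne \ell$ in $\{1,\dots,p\}$ and apply the $C$-almost monomiality of $G$ to the pair $(k,\ell)$ to obtain subgroups $H_1,\dots,H_t\leqslant G$ and linear characters $\lambda_i$ of $H_i$ with
\[
\sum_{i=1}^{t}\lambda_i^G \;=\; \sum_{j=1}^{m}\alpha_j\sigma_{X_j},\qquad \alpha_k>0,\ \alpha_\ell=0.
\]
I would then split the index set as $\{1,\dots,t\}=I_g\sqcup I_b$, placing $i\in I_g$ when $\lambda_i|_{H_i\cap N}$ is trivial and $i\in I_b$ otherwise.

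The engine of the proof is the following mild strengthening of Lemma \ref{leman}(2): for every $\chi\in\Irr(G)$ with $N\subseteq \Ker(\chi)$ and every $i$ one has
\[
\langle \lambda_i^G,\chi\rangle_G = 0\ \text{if}\ i\in I_b, \qquad \langle \lambda_i^G,\chi\rangle_G = \langle \widetilde{\lambda_i}^{G/N},\widetilde{\chi}\rangle_{G/N}\ \text{if}\ i\in I_g,
\]
where for $i\in I_g$ the character $\widetilde{\lambda_i}:H_iN/N\to\mathbb C^{*}$ is well defined by Lemma \ref{leman}(1). Both identities follow from a single Frobenius computation: writing $\langle \lambda_i,\chi|_{H_i}\rangle_{H_i}$ as a sum over a transversal of $H_i\cap N$ in $H_i$ and using $\chi|_{H_i\cap N}=1_{H_i\cap N}$, one extracts the factor $\sum_{n\in H_i\cap N}\lambda_i(n)$, which equals $0$ when $\lambda_i|_{H_i\cap N}\ne 1$ and equals $|H_i\cap N|$ otherwise. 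In particular no $\lambda_i^G$ with $i\in I_b$ contributes to $\sigma_{X_j}$ for any $j\leq p$.

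Because $\alpha_k>0$, some $\lambda_{i_0}^G$ has a constituent in $X_k\subseteq \Irr(G/N)$, and the observation above forces $i_0\in I_g$, so $I_g\ne\emptyset$. Summing over $I_g$ and comparing multiplicities at each $\chi\in\bigcup_{j\leq p}X_j=\Irr(G/N)$ via the good-case identity yields
\[
\sum_{i\in I_g}\widetilde{\lambda_i}^{G/N} \;=\; \sum_{j=1}^{p}\alpha_j\,\sigma_{\widetilde{X_j}},
\]
which has $\widetilde{X_k}$-coefficient $\alpha_k>0$ and $\widetilde{X_\ell}$-coefficient $\alpha_\ell=0$, providing the witness required by Definition \ref{cam-def} for $(G/N,C^{G/N})$. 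The main obstacle is the technical observation of the second paragraph; the quasi-monomial argument in Theorem \ref{t25} could avoid it because there $\Cons(\sum_i\lambda_i^G)\subseteq X_k\subseteq \Irr(G/N)$ was automatic, whereas in the almost monomial case one must explicitly identify and discard the bad summands.
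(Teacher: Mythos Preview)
Your argument is correct and follows the same overall route as the paper---pass from linear characters $\lambda_i$ of $H_i\leqslant G$ to linear characters $\widetilde{\lambda_i}$ of $H_iN/N\leqslant G/N$ and compare multiplicities---but you are considerably more careful on exactly the point where the paper is sketchy. The paper's proof invokes Proposition~\ref{313} and then simply asserts that ``as in the proof of Theorem~\ref{t25}, we can define the linear characters $\widetilde{\lambda_j}$ of $H_jN/N$''. However, in the almost monomial setting $\sum_j\lambda_j^G$ necessarily picks up constituents in the blocks $X_j$ with $j>p$, so for some indices one may have $N\not\subset\Ker(\lambda_j^G)$ and the hypothesis of Lemma~\ref{leman} is not available; the paper never addresses this. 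Your $I_g/I_b$ split together with the direct Frobenius computation (extracting the factor $\sum_{n\in H_i\cap N}\lambda_i(n)$) shows both that bad summands contribute nothing to any $\chi$ with $N\subset\Ker(\chi)$ and that good summands behave exactly as in Lemma~\ref{leman}(2). This is precisely the justification the paper's sketch needs; a minor cosmetic difference is that you work from Definition~\ref{cam-def} while the paper starts from the equivalent Proposition~\ref{313}.
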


\begin{proof}
Let $\widetilde{X_k} \in \widetilde{\mathcal X}$. Since $G$ is $C$-almost monomial, from Proposition \ref{313}, it follows that
there exist some subgroups $H_1,\ldots,H_s \leqslant G$ and some
linear characters $\lambda_1,\ldots,\lambda_s$ of $H_1,\ldots, H_s$ such that
$$\lambda_1^G+\cdots+\lambda_t^G=\alpha_1\sigma_{X_1}+\cdots+\alpha_{k-1}\sigma_{X_{k-1}}+\alpha_{k+1}\sigma_{X_{k+1}}+\cdots +\alpha_m\sigma_{X_{m}}.$$
As in the proof of Theorem \ref{t25}, we can define the linear characters $\widetilde{\lambda_j}$ of $H_jN/N$, and we have:
$$\widetilde{\lambda_1}^G+\cdots+\widetilde{\lambda_t}^G=\alpha_1\sigma_{\widetilde{X_1}}+\cdots+\alpha_{k-1}\sigma_{\widetilde{X_{k-1}}}+
\alpha_{k+1}\sigma_{\widetilde{X_{k-1}}}+\cdots +\alpha_m\sigma_{\widetilde{X_{m}}},$$
and thus $G/N$ is $C^{G/N}$-almost monomial.
\end{proof}

The following result generalizes \cite[Theorem 2.3]{cim} and Theorem \ref{t28}:

\begin{teor}\label{t214}
Let $G$ and $G'$ be two finite groups and let $C=(\mathcal X,\mathcal K)$ and $C'=(\mathcal X',\mathcal K')$ be supercharacter theories of $G$ and $G'$, respectively.
Then the following are equivalent:
\begin{enumerate}
\item[(1)] $G$ is $C$-almost monomial and $G'$ is $C'$-almost monomial.
\item[(2)] $G\times G'$ is $C\times C'$-almost monomial.
\end{enumerate}
\end{teor}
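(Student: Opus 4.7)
The plan is to mirror the structure of Theorem \ref{t28}, adapting the product construction so it isolates two distinguished indices rather than one.

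For the direction $(1)\Rightarrow (2)$, I fix two distinct indices $(k,k')\neq(\ell,\ell')$ parameterizing the parts $X_k\times X'_{k'}$ and $X_\ell\times X'_{\ell'}$ of $\mathcal X\times \mathcal X'$. By symmetry between $G$ and $G'$, I may assume $k\neq \ell$ (the case $k=\ell$ and $k'\neq \ell'$ is analogous with roles swapped). Invoking Definition \ref{cam-def} for $G$ produces subgroups $H_1,\ldots,H_t\leqslant G$ and linear characters $\lambda_1,\ldots,\lambda_t$ of them such that
$$\sum_{i=1}^t \lambda_i^G=\sum_{j}\alpha_j\sigma_{X_j},\qquad \alpha_k>0,\ \alpha_\ell=0.$$
For the $G'$-factor I take the trivial character $\mu$ of the trivial subgroup $\{1\}\leqslant G'$, whose induced character is the regular character $\mu^{G'}=\Reg(G')=\sum_{j'}\sigma_{X'_{j'}}$. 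Forming the subgroups $H_i\times\{1\}\leqslant G\times G'$ with the external-product linear characters $\lambda_i\times\mu$, the standard identities $(\lambda\times\mu)^{G\times G'}=\lambda^G\times\mu^{G'}$ and $\sigma_{X_j\times X'_{j'}}=\sigma_{X_j}\times\sigma_{X'_{j'}}$ yield
$$\sum_{i=1}^t(\lambda_i\times\mu)^{G\times G'}=\sum_{j,j'}\alpha_j\,\sigma_{X_j\times X'_{j'}},$$
whose coefficient at the $(k,k')$ part equals $\alpha_k>0$ and whose coefficient at the $(\ell,\ell')$ part equals $\alpha_\ell=0$. This is exactly the condition in Definition \ref{cam-def} for the supercharacter theory $C\times C'$.

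For the converse $(2)\Rightarrow (1)$, I proceed exactly as in the quasi-monomial case of Theorem \ref{t28}. The subgroup $\{1\}\times G'\unlhd G\times G'$ is $C\times C'$-normal, since it is the disjoint union of the superclasses $\{1\}\times K'$ for $K'\in\mathcal K'$; furthermore the induced supercharacter theory $(C\times C')^{(G\times G')/(\{1\}\times G')}$ on the quotient $(G\times G')/(\{1\}\times G')\cong G$ coincides with $C$, using the standard fact that $\{1_{G'}\}$ is always a singleton part of $\mathcal X'$. Applying Theorem \ref{t213} to this quotient gives that $G$ is $C$-almost monomial, and the symmetric argument (using $G\times\{1\}$) yields that $G'$ is $C'$-almost monomial.

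The main obstacle I anticipate is the bookkeeping in $(1)\Rightarrow (2)$: the product construction must simultaneously preserve positivity at $(k,k')$ and vanishing at $(\ell,\ell')$, and the asymmetry introduced by choosing which factor to invoke the almost-monomial hypothesis on requires a clean case split on whether $k\neq \ell$ or $k'\neq \ell'$. The underlying algebraic identities linking induction, external products, and supercharacters are routine, and the converse direction reduces cleanly to the already established quotient result.
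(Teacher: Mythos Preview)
Your proof is correct, and for $(1)\Rightarrow(2)$ it takes a genuinely different route from the paper. The paper works through the dual characterization of Proposition \ref{313}: it fixes a single pair $(k,k')$ to be \emph{avoided}, takes the Proposition \ref{313} witnesses for $G$ (avoiding $X_k$) and for $G'$ (avoiding $X'_{k'}$), and then \emph{adds} the two resulting sums after tensoring each with the regular character of the other factor; the coefficient at $(k,k')$ vanishes while every other coefficient is positive. You instead work directly from Definition \ref{cam-def}, fix the two pairs $(k,k')\neq(\ell,\ell')$, case-split on which coordinate differs, and tensor a single witness from that factor with $\Reg$ of the other factor. Your argument is shorter and uses the almost-monomial hypothesis of only one factor per case, at the cost of the case distinction; the paper's argument is symmetric in the two factors and avoids the split, at the cost of assembling and analyzing a larger sum. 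For $(2)\Rightarrow(1)$ both proofs are the same, reducing to Theorem \ref{t213} via the $C\times C'$-normality of $\{1\}\times G'$; your explicit remark that $\{1_{G'}\}\in\mathcal X'$ is needed to identify $(C\times C')^{(G\times G')/(\{1\}\times G')}$ with $C$ is a detail the paper leaves implicit.
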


\begin{proof}
$(1)\Rightarrow(2)$. Assume that $\mathcal X=\{X_1,\ldots,X_m\}$ and $\mathcal X'=\{X'_1,\ldots,X'_{m'}\}$. 
We fix $$(k,k') \in \{1,\ldots,m\}\times\{1,\ldots,m'\}.$$
Since $G$ is $C$-almost monomial, from Proposition \ref{313} it follows that there exists
some subgroups $H_1,\ldots,H_s$ of $G$, some linear characters $\lambda_1,\ldots,\lambda_s$ of $H_1,\ldots,H_s$,
and some positive integers $\alpha_i$ such that:
\begin{equation}\label{ecoo1}
\lambda_1^G+\cdots+\lambda_s^G=\alpha_1\sigma_{X_1}+\cdots+\alpha_{k-1}\sigma_{X_{k-1}}+\alpha_{k+1}\sigma_{X_{k+1}}+\cdots +\alpha_m\sigma_{X_{m}}.
\end{equation}
Similarly, there exists
some subgroups $H'_1,\ldots,H'_{s'}$ of $G'$, some linear characters $\lambda_1,\ldots,\lambda_{s'}$ of $H'_1,\ldots,H'_{s'}$,
and some positive integers $\alpha'_i$ such that:
\begin{equation}\label{ecoo2}
\lambda_1'^{G'}+\cdots+\lambda_{s'}'^{G'}=\alpha'_1\sigma_{X'_1}+\cdots+\alpha'_{k'-1}\sigma_{X'_{k'-1}}+\alpha'_{k'+1}\sigma_{X'_{k'+1}}+\cdots +\alpha_m\sigma_{X'_{m}}.
\end{equation}
If $\mathbf 1$ is the unique character of the trivial subgroup of $G$, and $\mathbf 1'$ is the unique character of the trivial subgroup of $G'$, then
\begin{equation}\label{unnu}
\mathbf 1^G=\Reg(G)=\sigma_{X_1}+\cdots+\sigma_{X_m},\;\mathbf 1'^{G'}=\Reg(G')=\sigma_{X'_1}+\cdots+\sigma_{X'_{m'}}.
\end{equation}
By straightforward computations, from \eqref{ecoo1}, \eqref{ecoo2} and \eqref{unnu}, it follows that:
\begin{align*}
& (\lambda_1\times 1_{G'})^{G\times G'}+\cdots+(\lambda_s\times 1_{G'})^{G\times G'}+
  (1_G\times \lambda'_1)^{G\times G'}+\cdots+ (1_G\times \lambda'_{s'})^{G\times G'}= \\
& = \sum_{i=1}^m \sum_{i'=1,\;i'\neq k'}^{m'}	\alpha_{i'}\sigma_{X_i\times X'_{i'}} +
    \sum_{i=1,\;i\neq k}^m \sum_{i'=1}^{m'}	\alpha_{i}\sigma_{X_i\times X'_{i'}} = \sum_{i=1}^m\sum_{i'=1}^{m'} a_{ii'}\sigma_{X_i\times X'_{i'}}.
\end{align*}
Note that $a_{ii'}>0$ for all $(i,i')\in\{1,\ldots,m\}\times\{1,\ldots,m'\}$ with $(i,i')\neq (k,k')$ and $a_{kk'}=0$. Therefore, from Proposition \ref{313},
it follows that $G\times G'$ is $C\times C'$-almost monomial.

$(2)\Rightarrow(1)$. Follows from Theorem \ref{t213}, using a similar argument as in the proof of Theorem \ref{t28}.
\end{proof}

\newpage
\section{Supercharacter theoretic Artin conjecture}

Let $G$ be a finite group.
Let $C=(\mathcal X,\mathcal K)\in \Sup(G)$ be a supercharacter theory of $G$.
We consider the multiplicative semigroup
$\Ar(C)$ generated by $\{L(s,\sigma_X)\;|\;X\in\mathcal X\}$. Obviously, 
$\Ar(C)$ is a subsemigroup of $\Ar$. Also, we consider 
$$\Hol(C,s_0) = \Hol(s_0)\cap \Ar(C), $$
the semigroup of the L-functions associated to $C$, which are holomorphic at $s_0$.

Assume that $\mathcal X=\{X_1,\ldots,X_m\}$. For $1\leq i\leq m$, we have that:
$$ F_i :=L(s,\sigma_{X_i})=\prod_{\chi_j \in X_i} f_j^{d_j},$$
where $d_j:=\chi_j(1)$, $1\leq j\leq r$.
The semigroup $\Ar(C)$ is generated by $F_1,\ldots, F_m$.
It follows that $F_1,\ldots,F_m$
are also multiplicatively independent, hence the semigroup $\Ar(C)$ is factorial of rank $m$,
i.e. it is isomorphic to $\mathbb Z_{\geq 0}^m$.

For $1\leq i\leq m$, let $\ell_i=\ord(F_i)$, where $\ord(F_i)$ denotes the order of the meromorphic function 
$F_i$ at $s_0$. We have that:
$$\Hol(C,s_0)=\{F_1^{a_1}\cdots F_m^{a_m}\;:\;a_1\ell_1+\cdots+a_m\ell_m \geq 0 \;\}.$$
Hence, by Gordan's lemma, see for instance \cite[Lemma 2.9]{bruns}, the semigroup $\Hol(C,s_0)$ is finitely generated. 
See also the proof of \cite[Theorem 1]{numb}.

The supercharacter-theoretic variant of Artin's conjecture (or $C$-Artin conjecture) at $s_0$, see \cite[Conjecture 1]{wong}, can be stated as:
$\Hol(C,s_0) = \Ar(C)$.

\begin{prop}\label{doi}
Let $G$ be a finite group which is $C$-quasi-monomial. Then $G$ satisfies the $C$-Artin conjecture.
\end{prop}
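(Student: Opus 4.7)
The plan is to mimic the classical argument showing that (quasi-)monomial groups satisfy Artin's conjecture, and transport it into the supercharacter setting via the defining relation of Definition \ref{cqm-def}.

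First, I fix $X_k\in\mathcal X$ and invoke the hypothesis: there exist subgroups $H_1,\ldots,H_t\leqslant G$, linear characters $\lambda_i$ of $H_i$, and a positive integer $d$ with
$$\lambda_1^G+\cdots+\lambda_t^G=d\,\sigma_{X_k}.$$
Applying $L(s,\cdot)$ and using the multiplicativity of Artin L-functions under addition of characters, together with Artin's invariance under induction $L(s,\lambda_i^G,K/\mathbb Q)=L(s,\lambda_i,K/K^{H_i})$, this gives the identity
$$\prod_{i=1}^{t} L(s,\lambda_i,K/K^{H_i}) = L(s,\sigma_{X_k})^d$$
of meromorphic functions on $\mathbb C$.

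Second, since each $\lambda_i$ is a \emph{linear} character of $H_i$, by class field theory each $L(s,\lambda_i,K/K^{H_i})$ is a Hecke L-function of the intermediate field $K^{H_i}$, and therefore holomorphic on $\mathbb C\setminus\{1\}$. Consequently the left-hand side is holomorphic on $\mathbb C\setminus\{1\}$, hence so is $L(s,\sigma_{X_k})^d$.

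Third, I pass from $L(s,\sigma_{X_k})^d$ back to $L(s,\sigma_{X_k})$ itself: by Brauer's theorem $L(s,\sigma_{X_k})$ is meromorphic on $\mathbb C$, and if it had a pole of order $n\geq 1$ at some $s_0\neq 1$, then its $d$-th power would have a pole of order $nd\geq 1$ at $s_0$, contradicting the previous step. Thus $L(s,\sigma_{X_k})\in\Hol(s_0)$ for every $s_0\neq 1$, and as $X_k$ was arbitrary we conclude $\Hol(C,s_0)=\Ar(C)$ for all $s_0\in\mathbb C\setminus\{1\}$, i.e.\ the $C$-Artin conjecture holds. There is no real obstacle here; the only point that requires care is the extraction of holomorphy of $L(s,\sigma_{X_k})$ from holomorphy of its $d$-th power, which is immediate once one knows a priori (via Brauer) that $L(s,\sigma_{X_k})$ is meromorphic.
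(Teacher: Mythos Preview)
Your proof is correct and follows essentially the same route as the paper's: apply $L(s,\cdot)$ to the identity $\sum_i\lambda_i^G=d\,\sigma_{X_k}$, use that each $L(s,\lambda_i^G)$ is a Hecke $L$-function and hence holomorphic off $s=1$, and conclude that $F_k^d$ (and therefore $F_k$) is holomorphic at $s_0$. You spell out a few points the paper leaves implicit (Artin's invariance under induction, the appeal to class field theory, and Brauer's theorem to justify extracting holomorphy from the $d$-th power), but the argument is the same.
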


\begin{proof}
Since $G$ is $C$-quasi-monomial, for any $k\in \{1,\ldots,m\}$, there exists some subgroups $H_1,\ldots,H_t \leqslant G$ and 
linear characters $\lambda_1,\ldots,\lambda_t$ of $H_1,\ldots, H_t$ such that:
$$\lambda_1^G+\cdots+\lambda_t^G=d\sigma_{X_k},$$ 
where $d$ is a positive integer. It follows that 
$$F_k^d = \prod_{i=1}^t L(\lambda_i^G,s)$$ 
is holomorphic at $s_0$, hence $F_k$ is holomorphic at $s_0$.
\end{proof}

\begin{obs}
\emph{If $G = \Gal(K/ \mathbb Q)$ is equipped with the maximal theory $C$, then, according to Proposition \ref{unu}(2), $G$ is 
$C$-quasi-monomial. Hence, from Proposition \ref{doi}, it follows that $G$ satisfies the $C$-Artin conjecture at $s_0$.
Note that the Artin L-functions attached to supercharacters with respect to the maximal theory are: 
$$L(s, 1_G) = \zeta(s)\text{ and }L(s, \Reg(G)-1_G) = \zeta_K(s)/\zeta(s).$$ 
By a result of Aramata and Brauer \cite{brauer}, we know that $\zeta_K(s)/\zeta(s)$ is holomorphic at $s_0$ and,
of course, the Riemann-zeta function $\zeta(s)$ is holomorphic on $\mathbb C\setminus\{1\}$.}
\end{obs}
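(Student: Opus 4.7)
The plan is to unpack the maximal supercharacter theory and verify the three distinct assertions packaged into the remark. First I would spell out the data: in the maximal theory $C = (\mathcal X, \mathcal K)$, one has $\mathcal X = \{X_1, X_2\}$ with $X_1 = \{1_G\}$ and $X_2 = \Irr(G) \setminus \{1_G\}$, so the two supercharacters are $\sigma_{X_1} = 1_G$ and $\sigma_{X_2} = \Reg(G) - 1_G$. From the definition of Artin L-functions we immediately have $L(s, 1_G) = \zeta(s)$; from the well-known identity $L(s, \Reg(G)) = \zeta_K(s)$ combined with the multiplicativity $L(s, \phi + \psi) = L(s, \phi) L(s, \psi)$ applied to $\Reg(G) = 1_G + (\Reg(G) - 1_G)$, we recover $L(s, \Reg(G) - 1_G) = \zeta_K(s)/\zeta(s)$. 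This yields the two explicit formulas stated in the remark.

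Next, to justify that $G$ satisfies the $C$-Artin conjecture at $s_0$, I would invoke Proposition \ref{unu}(3), which (via Aramata--Brauer) asserts that $G$ is $C$-quasi-monomial whenever $C$ is the maximal theory; Proposition \ref{doi} then applies verbatim and gives $\Hol(C, s_0) = \Ar(C)$ for every $s_0 \in \mathbb C \setminus \{1\}$. Note that the remark's reference to ``Proposition \ref{unu}(2)'' appears to be a typo for part (3); I would silently correct this when writing up the justification.

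Finally, to make the remark self-contained, I would verify holomorphy of the two generators $F_1 = \zeta(s)$ and $F_2 = \zeta_K(s)/\zeta(s)$ of $\Ar(C)$ directly at any $s_0 \neq 1$: the Riemann zeta function has a single simple pole at $s = 1$, hence is holomorphic on $\mathbb C \setminus \{1\}$; the Aramata--Brauer theorem (\cite{arama},\cite{brauer}) states that $\zeta_K(s)/\zeta(s)$ extends to an entire function on $\mathbb C$. Together these give a direct, unconditional verification of the $C$-Artin conjecture in the maximal-theory case, independent of the route through Propositions \ref{unu} and \ref{doi}.

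Since every step is either a citation or a one-line computation, there is no genuine obstacle; the main point of the remark is to exhibit that the general machinery of the paper recovers, in its coarsest nontrivial instance, precisely the classical Aramata--Brauer statement. The only small care needed is to be explicit that the two verifications (via $C$-quasi-monomiality, and directly via Aramata--Brauer) match up, which they do because the Aramata--Brauer theorem is itself the input to Proposition \ref{unu}(3).
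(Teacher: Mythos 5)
Your proposal is correct and follows essentially the same route as the paper's remark: identify the two supercharacters $1_G$ and $\Reg(G)-1_G$ of the maximal theory, compute $L(s,1_G)=\zeta(s)$ and $L(s,\Reg(G)-1_G)=\zeta_K(s)/\zeta(s)$, and conclude holomorphy at $s_0$ via $C$-quasi-monomiality (Aramata--Brauer) together with Proposition \ref{doi}. Your observation that the reference to Proposition \ref{unu}(2) should be to part (3) is also well taken, since part (3) is the statement covering the maximal theory.
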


\pagebreak

The following result generalizes Theorem \ref{t1}:

\begin{teor}\label{t11}
Let $G = \Gal(K/ \mathbb Q)$ and let $C=(\mathcal X,\mathcal K)$ be a supercharacter theory of $G$. If $G$ is 
$C$-almost monomial, then the following are equivalent:
\begin{enumerate}
\item[(1)] Supercharacter theoretic Artin conjecture is true: 
           $\Hol(C,s_0) = \Ar(C)$.
\item[(2)] The semigroup $\Hol(C,s_0)$ is factorial.
\end{enumerate}
\end{teor}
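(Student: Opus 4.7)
The implication $(1)\Rightarrow(2)$ is immediate, since $\Ar(C)\cong\mathbb N^m$ is itself factorial. For the substantial direction $(2)\Rightarrow(1)$, the plan is to set $\ell_i:=\ord_{s_0}(F_i)\in\mathbb Z$ and identify $\Ar(C)$ with $\mathbb N^m$ via $F_1^{a_1}\cdots F_m^{a_m}\leftrightarrow(a_1,\ldots,a_m)$, so that $R:=\Hol(C,s_0)=\{v\in\mathbb N^m:\sum_i v_i\ell_i\ge 0\}$; the goal becomes to show $\ell_i\ge 0$ for every $i$.

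First I would collect elements of $R$ coming from almost monomiality. By Proposition \ref{313}, for each $k$ there exist subgroups $H_j\leqslant G$ and linear characters $\lambda_j$ of $H_j$ such that $\sum_j\lambda_j^G=\sum_{i\ne k}\alpha_i\sigma_{X_i}$ with $\alpha_i>0$. Passing to Artin L-functions gives $\prod_j L(s,\lambda_j^G)=\prod_{i\ne k}F_i^{\alpha_i}$, and since each $L(s,\lambda_j^G)=L(s,\lambda_j,K/K^{H_j})$ is an abelian Hecke L-function, it is holomorphic on $\mathbb C\setminus\{1\}$ by class field theory. Hence $P_k:=\prod_{i\ne k}F_i^{\alpha_i}\in R$ for every $k$, and $\zeta_K=F_1\cdots F_m\in R$ because the Dedekind zeta function has only a simple pole at $s=1$.

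Assume now for contradiction that $R\ne\Ar(C)$; then $N:=\{i:\ell_i<0\}\ne\emptyset$, and $\zeta_K\in R$ forces $\sum_i\ell_i\ge 0$, so $P:=\{i:\ell_i>0\}\ne\emptyset$ as well, whence $\operatorname{cone}(R)=\{v\in\mathbb R^m_{\ge 0}:\sum_i v_i\ell_i\ge 0\}$ has full dimension $m$. Factoriality of $R$ gives $R\cong\mathbb N^{m'}$ with atoms $G_1,\ldots,G_{m'}$ that must be $\mathbb Z$-linearly independent in $\mathbb Z^m$, hence $\mathbb R$-linearly independent; so $\operatorname{cone}(R)=\sum_i\mathbb R_{\ge 0}G_i$ is a simplicial cone of dimension $m'$, and matching dimensions yields $m'=m$. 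Thus $\operatorname{cone}(R)$ is simplicial with exactly $m$ extreme rays.

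Finally I would enumerate these extreme rays directly: with $Z:=\{i:\ell_i=0\}$ they consist of the coordinate rays $\mathbb R_{\ge 0}e_j$ for $j\in P\cup Z$ together with the rays $\mathbb R_{\ge 0}(\ell_{j'}e_j+|\ell_j|e_{j'})$ for $(j,j')\in N\times P$, for a total of $|P|+|Z|+|N|\cdot|P|$. Equating to $m=|N|+|P|+|Z|$ gives $|N|(|P|-1)=0$, and since $|N|\ge 1$ this forces $|P|=1$. Writing $P=\{j_0\}$ and picking any $j_1\in N$, the $C$-almost monomial hypothesis applied to the pair $(j_1,j_0)$ produces $Q\in R$ with $Q_{j_1}>0$ and $Q_{j_0}=0$, whence $\sum_i Q_i\ell_i=\sum_{i\in N}Q_i\ell_i\le Q_{j_1}\ell_{j_1}<0$ contradicts $Q\in R$. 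The step I anticipate as most delicate is the toric fact that a factorial affine submonoid of $\mathbb N^m$ must have a simplicial rational cone; granted this, the remainder is a routine combinatorial count pitted against the element produced by almost monomiality.
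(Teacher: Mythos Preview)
Your argument is correct, but it takes a noticeably longer path than the paper's. After locating $k$ with $\ell_k<0$ and some $\ell$ with $\ell_\ell>0$ (from $\zeta_K\in\Hol(C,s_0)$), the paper bypasses all cone geometry: it simply writes down the $m$ elements $F_\ell^{n_i}F_i$, where $n_i:=\min\{n:\ord_{s_0}(F_\ell^nF_i)\ge 0\}$, checks directly that each is irreducible in $\Hol(C,s_0)$, and invokes factoriality of rank $m$ to conclude that these $m$ irreducibles are the \emph{entire} Hilbert basis $\mathcal H$. The $C$-almost monomial hypothesis (in its original form, not via Proposition~\ref{313}) then produces a holomorphic $F=\prod_i F_i^{\alpha_i}$ with $\alpha_k>0$ and $\alpha_\ell=0$; but any factorization of $F$ over $\mathcal H$ must use the atom $F_\ell^{n_k}F_k$ at least $\alpha_k$ times, forcing the $F_\ell$-exponent to be at least $\alpha_k n_k>0$, contradicting $\alpha_\ell=0$.

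Your route---establishing that factoriality forces $\operatorname{cone}(R)$ to be simplicial, enumerating its extreme rays as $|P|+|Z|+|N|\cdot|P|$, and solving $|N|(|P|-1)=0$ to get $|P|=1$---reaches the same endpoint but with more overhead: you need the toric lemma you flag as delicate, plus a facet/extreme-ray analysis of a half-space sliced orthant. What your approach buys is a structural statement (that $|P|=1$ whenever $\Hol(C,s_0)$ is factorial and $N\ne\emptyset$) that the paper never isolates; what the paper's approach buys is brevity and the avoidance of any polyhedral machinery, since exhibiting $m$ explicit atoms and reading off the contradiction from exponents is entirely elementary once one has chosen the distinguished index $\ell$.
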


\begin{proof}
$(1)\Rightarrow (2)$ Since the semigroup $\Ar(C)$ is factorial, there is nothing to prove.

$(2)\Rightarrow (1)$ Suppose that supercharacter theoretic Artin conjecture is not true. Then, there exists $1\leq k\leq m$
such that \begin{equation}\label{e1} \ord(F_k)<0. \end{equation}
The Dedekind zeta function $\zeta_K$ of $K$ can be decomposed as 
\begin{equation}\label{e2} \zeta_K = \prod_{i=1}^r f_i^{d_i} =  F_1\cdots F_m,\end{equation}
Since $\zeta_K$ is holomorphic in $\mathbb C\setminus\{1\}$, it follows that 
\begin{equation}\label{e3} \ord(\zeta_k)\geq 0.\end{equation}
From \eqref{e1},\eqref{e2} and \eqref{e3} it follows that there exists $\ell\in\{1,\ldots,m\}$ such that $\ord(F_{\ell})>0$.
For $i\in\{1,\ldots,m\}$, let $$n_i:=\min\{m\;:\;\ord(F_{\ell}^mF_i)\geq 0\}.$$
Since $f_1,\ldots,f_r$ are multiplicatively independent, the functions $F_{\ell}^{n_1}F_1,\ldots, F_{\ell}^{n_m}F_m$ are
irreducible in $\Hol(C,s_0)$. The Hilbert
basis $\mathcal H$ of $\Hol(C,s_0)$ is the uniquely determined minimal system of generators
of $\Hol(C,s_0)$, hence $\Hol(C,s_0)$ is factorial if and only if 
$\mathcal H$ has $m$ elements. It follows that $$\mathcal H=\{F_{\ell}^{n_1}F_1,\ldots, F_{\ell}^{n_m}F_m\}.$$
From \eqref{e1} it follows that $n_k>0$. Since $G$ is $C$-almost monomial, there exist
some subgroups $H_1,\ldots,H_t$ of $G$ and linear characters $\lambda_1,\ldots,\lambda_t$ of $H_1,\ldots,H_t$ such that 
\begin{equation}\label{e4}
\lambda_1^G + \cdots +\lambda_t^G = \alpha_1\sigma_{X_1}+\cdots+\alpha_m\sigma_{X_m},
\end{equation}
where $\alpha_i\geq 0$ are integers, $\alpha_k>0$ and $\alpha_{\ell}=0$. By Class Field Theory, for any $i\in\{1,\ldots,n\}$,
the Artin L-function $L(s,\lambda_i^G)$ is a Hecke L-function, so it is holomorphic at $s_0$. Hence, the function
\begin{equation}\label{e5}
F:=\prod_{i=1}^t L(s,\lambda_i^G) = L(s, \lambda_1^G + \cdots +\lambda_t^G),
\end{equation}
is holomorphic at $s_0$. From \eqref{e4} and \eqref{e5} it follows that
$$F = F_1^{\alpha_1}\cdots F_m^{\alpha_m} \in \Hol(C,s_0).$$
Since $\alpha_{k}>0$ and $\alpha_{\ell}=0$ this contradicts the fact that $F$ is a product of elements of $\mathcal H$.
\end{proof}

The following result generalizes Theorem \ref{t2}:

\begin{teor}\label{t22}
Let $G=\Gal(K/\mathbb Q)$ and let $C=(\mathcal X,\mathcal K)$ be a supercharacter theory of $G$ with $\mathcal X=\{X_1,\ldots,X_m\}$.
If $G$ is $C$-almost monomial and $s_0$ is not a common zero for any two distinct 
L-functions $L(s,\sigma_{X_{\ell}})$ and $L(s,\sigma_{X_k})$, where $k\neq \ell \in \{1,\ldots,m\}$, then 
all Artin L-functions from $\Ar(C)$ are holomorphic at $s_0$, i.e. the supercharacter theoretic 
Artin conjecture is true at $s_0$.
\end{teor}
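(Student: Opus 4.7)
The plan is to mimic the proof of Theorem \ref{t2} in the supercharacter-theoretic framework, using the $C$-almost monomial hypothesis in place of the classical almost monomial hypothesis. Write $F_i := L(s,\sigma_{X_i})$ and $\ell_i := \ord_{s_0}(F_i)$ for $1 \leq i \leq m$; the goal is to show $\ell_i \geq 0$ for every $i$. I proceed by contradiction: assume $\ell_k < 0$ for some $k$.

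First I would exploit the factorization $\zeta_K = F_1 \cdots F_m$, which follows from $\Reg(G) = \sum_{i=1}^m \sigma_{X_i}$ (the partition of $\Irr(G)$) and the multiplicativity of Artin L-functions in the character argument. Since $\zeta_K$ is holomorphic on $\mathbb{C}\setminus\{1\}$, we have $\sum_i \ell_i \geq 0$, and combined with $\ell_k < 0$ this forces $\ell_\ell > 0$ for some index $\ell \neq k$. The hypothesis that no two distinct L-functions from $\{F_1,\ldots,F_m\}$ share a zero at $s_0$ pins $\ell$ down uniquely, so in particular $\ell_i \leq 0$ for every $i \neq \ell$ (including $i = k$, consistent with $\ell_k < 0$).

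Next I would invoke the $C$-almost monomial property for the pair $(k,\ell)$ to obtain subgroups $H_1,\ldots,H_t \leqslant G$ and linear characters $\lambda_1,\ldots,\lambda_t$ of $H_1,\ldots,H_t$ such that
$$\lambda_1^G + \cdots + \lambda_t^G = \sum_{i=1}^m \alpha_i \sigma_{X_i}, \qquad \alpha_k > 0,\; \alpha_\ell = 0,\; \alpha_i \geq 0 \text{ for all } i.$$
By class field theory, each $L(s,\lambda_j^G)$ is an abelian Hecke L-function, hence holomorphic at $s_0 \neq 1$, so the product
$$F := \prod_{j=1}^t L(s,\lambda_j^G) = \prod_{i=1}^m F_i^{\alpha_i}$$
satisfies $\ord_{s_0}(F) \geq 0$. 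On the other hand, using $\alpha_\ell = 0$, I would split
$$\ord_{s_0}(F) = \alpha_k \ell_k + \sum_{i \neq k,\ell} \alpha_i \ell_i.$$
The first term is strictly negative (since $\alpha_k > 0$ and $\ell_k < 0$) and each term of the remaining sum is $\leq 0$ (since $\alpha_i \geq 0$ and $\ell_i \leq 0$ for $i \neq \ell$), yielding $\ord_{s_0}(F) < 0$, a contradiction.

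The argument is essentially a careful accounting of orders of vanishing; the substantive content is that $C$-almost monomiality furnishes a decomposition hitting $\sigma_{X_k}$ with positive coefficient while avoiding $\sigma_{X_\ell}$ entirely, which is precisely what isolates the single positive-order index $\ell$ and forces the hypothetical pole at $k$ to propagate into $\ord_{s_0}(F)$. The most delicate point is the holomorphy of the auxiliary function $F$, which relies on the fact that each $\lambda_j^G$, being induced from a one-dimensional character, corresponds via class field theory to an abelian Hecke L-function and is therefore entire on $\mathbb{C}\setminus\{1\}$.
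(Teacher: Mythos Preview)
Your proof is correct and follows essentially the same approach as the paper's: both arguments assume a pole of some $F_k$, use the holomorphy of $\zeta_K = F_1\cdots F_m$ to locate a zero $F_\ell$, apply the $C$-almost monomial property to the pair $(k,\ell)$ to build a holomorphic product $F=\prod_i F_i^{\alpha_i}$ with $\alpha_k>0$ and $\alpha_\ell=0$, and then derive a contradiction from the order of $F$ at $s_0$. The only cosmetic difference is that you invoke the ``no common zero'' hypothesis at the outset (to conclude $\ell_i\le 0$ for all $i\neq \ell$) and then compute $\ord_{s_0}(F)<0$ directly, whereas the paper applies the hypothesis at the end by producing a second index with a zero; the logical content is identical.
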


\begin{proof}
We assume that $s_0$ is a pole of $F_j$, that is $\ord(F_j)<0$. Since the Dedekind zeta function $\zeta_K=F_1\cdots F_m$ is holomorphic at $s_0$, 
there is an index $k\neq j$ such that $F_k(s_0)=0$. Since $G$ is $C$-almost monomial, there exist 
some subgroups $H_1,\ldots,H_t\leqslant G$ and $\lambda_1,\ldots,\lambda_t$ some linear characters on $H_1,\ldots,H_t$ such that
$$ \lambda_1^G + \cdots +\lambda_t^G = \alpha_1 \sigma_{X_1}+ \cdots +\alpha_m\sigma_{X_m}, $$
with $\alpha_j>0$ and $\alpha_k=0$. The L-function
$$L(s,\lambda_1^G + \cdots +\lambda_t^G )=F_1^{\alpha_1}\cdots F_{k-1}^{\alpha_{k-1}}\cdot F_{k+1}^{\alpha_{k+1}}\cdots F_m^{\alpha_m},$$
is holomorphic at $s_0$. Since $\alpha_j>0$ and $\ord(F_j)<0$, it follows that there exists some index $\ell \notin \{j,k\}$ such that
$F_{\ell}(s_0)=0$, which contradicts the hypothesis.
\end{proof}

\textbf{Acknowledgment.} We gratefully acknowledge the use of the computer algebra system GAP (\cite{gap}) for our experiments.

{}

\vspace{2mm} \noindent {\footnotesize
\begin{minipage}[b]{15cm}
\emph{Mircea Cimpoea\c s}, Simion Stoilow Institute of Mathematics, Research unit 5, P.O.Box 1-764,
Bucharest 014700, Romania and University Politehnica of Bucharest, Faculty of
Applied Sciences, 
Bucharest, 060042, Romania.\\ 
 E-mail: mircea.cimpoeas@imar.ro
\end{minipage}}

\vspace{2mm} \noindent {\footnotesize
\begin{minipage}[b]{15cm}
\emph{Alexandru Florin Radu}, University Politehnica of Bucharest, Faculty of
Applied Sciences, 
Bucharest, 060042, Romania.\\ 
 E-mail: sasharadu@icloud.com
\end{minipage}}
\end{document}